\newcommand{\pr}{\mathrm{pr}}
\newcommand{\id}{\mathrm{id}}
\newcommand{\C}{\mathcal C}
\newcommand{\U}{\mathcal U}
\newcommand{\N}{\mathbb N}
\newcommand{\R}{\mathbb R}
\newcommand{\ext}{\mathrm{ext}}
\newcommand{\Cl}{\mathrm{Cl}}
\newtheorem{theorem}{Theorem}[section]
\newtheorem{corollary}[theorem]{Corollary}
\newtheorem{lemma}[theorem]{Lemma}
\newtheorem*{question1}{Question 7.2}
\newtheorem*{question2}{Question 7.3}
\newtheorem{proposition}[theorem]{Proposition}
\theoremstyle{definition}
\newtheorem{df}[theorem]{Definition}
\title{On the openness of the idempotent barycenter map}
\author[T.~Radul]{Taras Radul}
\address[T.~Radul]{Kazimierz Wielki University, Bydgoszcz (Poland) and Ivan Franko National University of Lviv (Ukraine)}
\email{tarasradul@yahoo.co.uk}
\subjclass[2010]{52A30; 54C10; 28A33}
\keywords{open map; idempotent (Maslov) measure; idempotent barycenter map}
\begin{document}
\begin{abstract} We show that the openness of the idempotent barycenter map is equivalent to the openness of the map of max-plus convex combination. As corollary we obtain that the idempotent barycenter map is open for the spaces of idempotent measures.
\end{abstract}

\maketitle

\section{Introduction}

The notion of idempotent (Maslov) measure finds important applications in different
parts of mathematics, mathematical physics and economics (see the survey article
\cite{Litv} and the bibliography therein). Topological and categorical properties of the functor of idempotent measures were studied in \cite{Zar}. There are some parallels between the theory of probability measures and idempotent measures (see for example \cite{Radul}).

The problem of the openness of the barycentre map of probability measures was investigated  in \cite{Fed}, \cite{Fed1},  \cite{Eif}, \cite{OBr} and \cite{Pap}. In particular, it is proved in \cite{OBr} that the barycentre map for a compact convex set  in a locally convex space is open iff the map $(x, y)\mapsto 1/2 (x + y)$ is open.

Zarichnyj defined in \cite{Zar} the idempotent barycentre map for idempotent measures and asked the following two questions:

\begin{question1}\cite{Zar} Characterize the class of max-plus convex compact spaces for which the idempotent barycenter map is open. In particular, is the latter property equivalent to the openness of the map $(x, y)\mapsto x \oplus y$?
\end{question1}

It is proved in \cite{Fed1} that the product of barycentrically open compact convex sets (i.e. compact convex
sets for which the barycentre map  is open) is
again barycentrically open.

\begin{question2}\cite{Zar}Is an analogous fact true for idempotent barycentrically open max-plus
convex sets?
\end{question2}

In this paper we characterize when  the idempotent barycenter map is open. However we show that the openness of the idempotent barycenter map is not equivalent to the openness of the map $(x, y)\mapsto x \oplus y$. We also  answer in the negative  the second question.

\section{Idempotent measures: preliminaries}

In the sequel, all maps will be assumed to be continuous. Let $X$ be a compact Hausdorff space. We shall denote by $C(X)$ the
Banach space of continuous functions on $X$ endowed with the sup-norm. For any $c\in\ R$ we shall denote  by $c_X$ the
constant function on $X$ taking the value $c$.

Let $\R_{\max}=\R\cup\{-\infty\}$ be the metric space endowed with the metric $\varrho$ defined by $\varrho(x, y) = |e^x-e^y|$.
 Following the notation of idempotent mathematics (see e.g., \cite{MS}) we use the
notations $\oplus$ and $\odot$ in $\R$ as alternatives for $\max$ and $+$ respectively. The convention $-\infty\odot x=-\infty$ allows us to extend $\odot$ and $\oplus$  over $\R_{\max}$.

Max-plus convex sets were introduced in \cite{Z}.
Let $\tau$ be a cardinal number. Given $x, y \in \R^\tau$ and $\lambda\in\R_{\max}$, we denote by $y\oplus x$ the coordinatewise
maximum of x and y and by $\lambda\odot x$ the vector obtained from $x$ by adding $\lambda$ to each of its coordinates. A subset $A$ in $\R^\tau$ is said to be  max-plus convex if $\alpha\odot a\oplus  b\in A$ for all $a, b\in A$ and $\alpha\in\R_{\max}$ with $\alpha\le 0$. It is easy to check that $A$  is   max-plus convex iff $\oplus_{i=1}^n\lambda_i\odot x_i\in A$ for all $x_1,\dots, x_n\in A$ and $\lambda_1,\dots,\lambda_n\in\R_{\max}$ such that $\oplus_{i=1}^n\lambda_i=0$. In the following by max-plus convex compactum we mean a max-plus convex compact subset of $\R^\tau$.

We denote by $\odot:\R\times C(X)\to C(X)$ the map acting by $(\lambda,\varphi)\mapsto \lambda_X+\varphi$, and by $\oplus:C(X)\times C(X)\to C(X)$ the map acting by $(\psi,\varphi)\mapsto \max\{\psi,\varphi\}$.

\begin{df}\cite{Zar} A functional $\mu: C(X) \to \R$ is called an idempotent  measure (a Maslov measure) if

\begin{enumerate}
\item $\mu(1_X)=1$;
\item $\mu(\lambda\odot\varphi)=\lambda\odot\mu(\varphi)$ for each $\lambda\in\R$ and $\varphi\in C(X)$;
\item $\mu(\psi\oplus\varphi)=\mu(\psi)\oplus\mu(\varphi)$ for each $\psi$, $\varphi\in C(X)$.
\end{enumerate}

\end{df}

Let $IX$ denote the set of all idempotent  measures on a compactum $X$. We consider
$IX$ as a subspace of $\R^{C(X)}$. It is shown in \cite{Zar} that $IX$ is a compact max-plus subset of $\R^{C(X)}$. The construction $I$ is  functorial what means that for each continuous map $f:X\to Y$ we can consider a continuous map $If:IX\to IY$ defined as follows $If(\mu)(\psi)=\mu(\psi\circ f)$ for $\mu\in IX$ and $\psi\in C(Y)$.

By $\delta_{x}$ we denote the Dirac measure supported by the point $x\in X$. We can consider a map $\delta X:X\to IX$ defined as $\delta X(x)=\delta_{x}$, $x\in X$. The map $\delta X$ is continuous, moreover it is an embedding \cite{Zar}. It is also shown in \cite{Zar} that the set $$I_\omega X=\{\oplus_{i=1}^n\lambda_i\odot\delta_{x_i}\mid\lambda_i\in\R_{\max},\ i\in\{1,\dots,n\},\ \oplus_{i=1}^n\lambda_i=0,\ x_i\in X,\ n\in\N\},$$ (i.e., the set of idempotent probability measures of finite support) is dense in $IX$. Let us also remark that for a finite compactum $X=\{1,\dots,n\}$ we have $IX=\{\oplus_{i=1}^n\lambda_i\odot\delta_i\mid\lambda_i\in\R_{\max},$ such that $\oplus_{i=1}^n\lambda_i=0\}$ \cite{Zar}.

Let $A\subset  \R^T$ be a compact max-plus convex subset. For each $t\in T$ we put $f_t=\pr_t|_A:A\to \R$ where $\pr_t:\R^T\to\R$ is the natural projection.    Given $\mu\in IA$, the point $\beta_A(\mu)\in\R^T$ is defined by the conditions $\pr_t(\beta_A(\mu))=\mu(f_t)$ for each $t\in T$. It is shown in \cite{Zar} that $\beta_A(\mu)\in  A$ for each $\mu\in I(A)$ and the map $\beta_A : I(A)\to A$ is continuous.
The map $\beta_A$ is called the idempotent barycenter map. It follows from results of \cite{Zar} that for each compactum $X$ we have $\beta_{IX}\circ I(\delta X)=\id_{IX}$ and for each map $f:X\to Y$ between compacta $X$ and $Y$ we have $\beta_{IY}\circ I^2f=If\circ\beta_{IX}$.

\section{The openness of max-plus convex combination of idempotent measures}
Put $J=\{(t,p)\in [-\infty,0]\times [-\infty,0]\mid t\oplus p=0\}$.
Let $X$ be a  max-plus convex compactum.  We consider a map $s_X:X\times X\times J\to X$ defined by the formula $s_X(x,y,t,p)=t\odot x\oplus p\odot y$.

Since the set $I_\omega X$  is dense in $IX$,  the following lemma can be obtained by direct checking for  idempotent  measures of finite support.

\begin{lemma}\label{Comm} Let $f:X\to Y$ be a continuous  map between compacta $X$ and $Y$. The  diagram
$$ \CD
I(X)\times I(X)\times J  @>I(f)\times I(f)\times\id_{J}>>   I(Y)\times I(Y)\times J \\
@VV s_{I(X)} V      @VV s_{I(Y)} V\\
I(X)  @>I(f)>>  I(Y) \\
\endCD
$$ is commutative.
\end{lemma}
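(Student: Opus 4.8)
The plan is to exploit the density of $I_\omega X$ in $IX$, as the paragraph preceding the lemma suggests, and reduce the verification to idempotent measures of finite support, where both sides of the square become explicit finite Max-Plus combinations that can be compared directly.

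First I would fix a pair $(\mu,\nu)\in I(X)\times I(X)$ and $(t,p)\in J$, and unwind the definitions. Going one way around the square, we form $s_{I(X)}(\mu,\nu,t,p)=t\odot\mu\oplus p\odot\nu\in I(I(X))$ — wait, more precisely $s_{I(X)}$ lands in $I(X)$, and we must be careful: here $s_{I(X)}$ is the Max-Plus combination map on the Max-Plus convex compactum $I(X)$, so $s_{I(X)}(\mu,\nu,t,p)$ is the idempotent measure on $X$ given on $\varphi\in C(X)$ by $(t\odot\mu\oplus p\odot\nu)(\varphi)=\max\{t+\mu(\varphi),\,p+\nu(\varphi)\}$, using that $I(X)\subset\R^{C(X)}$ is Max-Plus convex. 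Applying $I(f)$ then gives the functional $\psi\mapsto\max\{t+\mu(\psi\circ f),\,p+\nu(\psi\circ f)\}$ on $C(Y)$. Going the other way, $I(f)\times I(f)\times\id_J$ sends $(\mu,\nu,t,p)$ to $(I(f)(\mu),I(f)(\nu),t,p)$, and then $s_{I(Y)}$ produces the functional $\psi\mapsto\max\{t+I(f)(\mu)(\psi),\,p+I(f)(\nu)(\psi)\}=\max\{t+\mu(\psi\circ f),\,p+\nu(\psi\circ f)\}$. These two functionals on $C(Y)$ coincide, so the square commutes. This computation is in fact completely formal and does not even need finite support; the density remark is just a convenience.

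If one prefers the route flagged in the text, I would instead take $\mu=\oplus_{i=1}^n\lambda_i\odot\delta_{x_i}$ and $\nu=\oplus_{j=1}^m\kappa_j\odot\delta_{y_j}$ in $I_\omega X$, compute that $s_{I(X)}(\mu,\nu,t,p)=\bigl(\oplus_i(t\odot\lambda_i)\odot\delta_{x_i}\bigr)\oplus\bigl(\oplus_j(p\odot\kappa_j)\odot\delta_{y_j}\bigr)$, note the weights still $\oplus$-sum to $t\oplus p=0$, apply $I(f)$ using $I(f)(\delta_x)=\delta_{f(x)}$ and linearity of $I(f)$ with respect to $\oplus$ and $\odot$, and check that the result agrees with $s_{I(Y)}(I(f)(\mu),I(f)(\nu),t,p)$; then invoke continuity of all four maps in the square together with the density of $I_\omega X\times I_\omega X\times J$ in $I(X)\times I(X)\times J$ to conclude commutativity on the whole space.

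There is essentially no serious obstacle here: the only points requiring a little care are (i) correctly interpreting $s_{I(X)}$ as the Max-Plus combination inside $I(X)$ viewed as a subset of $\R^{C(X)}$, so that $(t\odot\mu\oplus p\odot\nu)(\varphi)=\max\{t+\mu(\varphi),p+\nu(\varphi)\}$, and (ii) recalling from the functoriality of $I$ that $I(f)$ commutes with both $\odot$ and $\oplus$ and sends $\delta_x$ to $\delta_{f(x)}$, which is exactly what makes the finite-support computation go through. Once these are in place the diagram chase is a one-line identity of functionals on $C(Y)$, and the density/continuity argument (if used at all) is routine.
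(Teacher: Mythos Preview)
Your proposal is correct and matches the paper's approach, which consists of nothing more than the sentence preceding the lemma: check on $I_\omega X$ and invoke density. Your additional observation that the identity $(t\odot\mu\oplus p\odot\nu)(\psi\circ f)=\max\{t+\mu(\psi\circ f),\,p+\nu(\psi\circ f)\}$ already holds for \emph{all} $\mu,\nu\in I(X)$, so that density is not even needed, is a small but genuine simplification over what the text suggests.
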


 The main goal of this section is to prove that the map $s_{IX}:IX\times IX\times J\to IX$ is open for each compactum $X$. We start with a finite $X$.

\begin{lemma}\label{Fin} The map $s_{IX}$ is open for each finite compactum $X$.
\end{lemma}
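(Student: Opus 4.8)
\textit{Proof plan.} Write $X=\{x_1,\dots,x_n\}$. The plan is first to identify $IX$ with the ``max-plus simplex'' $\Delta=\{\lambda=(\lambda_1,\dots,\lambda_n)\in[-\infty,0]^n\mid \oplus_{i=1}^n\lambda_i=0\}$ via $h(\lambda)=\oplus_{i=1}^n\lambda_i\odot\delta_{x_i}$, so that $h(\lambda)(\varphi)=\max_i(\lambda_i+\varphi(x_i))$ for $\varphi\in C(X)$. The map $h$ is a continuous bijection from the compactum $\Delta$ onto $IX$: surjectivity holds because $h(\Delta)=I_\w X$ is dense in $IX$ while $h(\Delta)$ is compact, hence closed; injectivity holds because $\lambda_i$ can be recovered from $h(\lambda)$ as $\inf\{h(\lambda)(\varphi)\mid \varphi\in C(X),\ \varphi\le 0,\ \varphi(x_i)=0\}$. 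Thus $h$ is a homeomorphism. Using $t\odot h(\lambda)\oplus p\odot h(\kappa)=h\big((t\odot\lambda_i\oplus p\odot\kappa_i)_{i=1}^n\big)$ (the right-hand vector lies in $\Delta$ because $(t,p)\in J$), the map $s_{IX}$ is conjugate via $h$ to the map $\sigma\colon\Delta\times\Delta\times J\to\Delta$ given coordinatewise by $\sigma(\lambda,\kappa,t,p)_i=(t\odot\lambda_i)\oplus(p\odot\kappa_i)$. So it suffices to prove that $\sigma$ is open.

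Since $\Delta$ and $J$ are compact metrizable, it is enough to verify the following lifting property: for every $(\lambda^0,\kappa^0,t_0,p_0)$ and every sequence $\eta^{(k)}\to\eta^0:=\sigma(\lambda^0,\kappa^0,t_0,p_0)$ in $\Delta$, there exist $(\lambda^{(k)},\kappa^{(k)},t_k,p_k)\to(\lambda^0,\kappa^0,t_0,p_0)$ with $\sigma(\lambda^{(k)},\kappa^{(k)},t_k,p_k)=\eta^{(k)}$ for all large $k$. Put $u^0:=t_0\odot\lambda^0$, $w^0:=p_0\odot\kappa^0$, so $\eta^0=u^0\oplus w^0$, $\max_i u^0_i=t_0$, $\max_i w^0_i=p_0$. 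Assume first that $t_0$ and $p_0$ are finite. Split $\eta^{(k)}$ coordinatewise along the splitting of $\eta^0$: set $u^{(k)}_i:=\eta^{(k)}_i$, $w^{(k)}_i:=w^0_i$ when $u^0_i\ge w^0_i$, and $u^{(k)}_i:=u^0_i$, $w^{(k)}_i:=\eta^{(k)}_i$ when $u^0_i<w^0_i$. Then $u^{(k)}\to u^0$, $w^{(k)}\to w^0$ in $[-\infty,0]^n$ and $u^{(k)}_i\oplus w^{(k)}_i=\eta^{(k)}_i$ for all large $k$ (finitely many coordinates). Now renormalize: put $t_k:=\max_i u^{(k)}_i\to t_0$, $p_k:=\max_i w^{(k)}_i\to p_0$, $\lambda^{(k)}:=(-t_k)\odot u^{(k)}$, $\kappa^{(k)}:=(-p_k)\odot w^{(k)}$ (well defined for large $k$ since $t_k,p_k$ are then finite). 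Then $\lambda^{(k)},\kappa^{(k)}\in\Delta$, and by continuity of the coordinatewise $\odot$ at finite scalars, $\lambda^{(k)}\to\lambda^0$, $\kappa^{(k)}\to\kappa^0$; moreover $\max(t_k,p_k)=\max_i(u^{(k)}_i\oplus w^{(k)}_i)=\max_i\eta^{(k)}_i=0$, so $(t_k,p_k)\in J$ and $(t_k,p_k)\to(t_0,p_0)$; and $\sigma(\lambda^{(k)},\kappa^{(k)},t_k,p_k)_i=(t_k\odot\lambda^{(k)}_i)\oplus(p_k\odot\kappa^{(k)}_i)=u^{(k)}_i\oplus w^{(k)}_i=\eta^{(k)}_i$, as required.

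It remains to treat the degenerate cases, where membership $(t_0,p_0)\in J$ forces exactly one of $t_0,p_0$ to equal $-\infty$ and the other to equal $0$. If $p_0=-\infty$ then $\eta^0=\lambda^0$, and one simply takes $\lambda^{(k)}:=\eta^{(k)}$, $\kappa^{(k)}:=\kappa^0$, $t_k:=0$, $p_k:=-\infty$; if $t_0=-\infty$ one takes $\lambda^{(k)}:=\lambda^0$, $\kappa^{(k)}:=\eta^{(k)}$, $t_k:=-\infty$, $p_k:=0$. In both cases the required convergences and the identity $\sigma(\lambda^{(k)},\kappa^{(k)},t_k,p_k)=\eta^{(k)}$ are immediate from $-\infty\odot x=-\infty$. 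I expect the main technical nuisance to be exactly this bookkeeping around $-\infty$: one must make sure that the coordinatewise splitting in the second paragraph genuinely converges in $\Delta\times\Delta\times J$, which is why the pieces $u^{(k)},w^{(k)}$ are arranged to keep their maximal coordinates bounded away from $-\infty$ precisely when $t_0,p_0$ are finite, and the boundary of $J$ is handled separately by the trivial assignments above.
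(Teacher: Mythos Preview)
Your overall strategy matches the paper's: identify $IX$ with the max-plus simplex, use the sequential lifting criterion for openness, split each coordinate of the target along the decomposition $\eta^0=u^0\oplus w^0$, and then renormalize. However, your coordinatewise split has a genuine gap on the \emph{tie set} $T:=\{i:u^0_i=w^0_i\}$. If $i\in T$ with $u^0_i=w^0_i$ finite, your rule assigns $u^{(k)}_i=\eta^{(k)}_i$ and $w^{(k)}_i=w^0_i$, whence $u^{(k)}_i\oplus w^{(k)}_i=\max(\eta^{(k)}_i,w^0_i)$; this equals $\eta^{(k)}_i$ only when $\eta^{(k)}_i\ge w^0_i=\eta^0_i$, which need not hold for any $k$. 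Concretely, take $n=2$, $\lambda^0=\kappa^0=(0,0)$, $t_0=p_0=0$, and $\eta^{(k)}=(-1/k,0)$: your rule gives $u^{(k)}=(-1/k,0)$, $w^{(k)}=(0,0)$, so $u^{(k)}\oplus w^{(k)}=(0,0)\ne\eta^{(k)}$ for every $k$. The parenthetical ``(finitely many coordinates)'' does not rescue this, since the failure is not a matter of waiting for large $k$.

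The fix is small: on $T$, set \emph{both} $u^{(k)}_i=w^{(k)}_i=\eta^{(k)}_i$; this preserves the convergences $u^{(k)}\to u^0$, $w^{(k)}\to w^0$ and makes the identity $u^{(k)}\oplus w^{(k)}=\eta^{(k)}$ immediate on $T$, while your existing argument handles the strict-inequality coordinates for large $k$. This is exactly what the paper does: it introduces the set $C=\{i:\lambda_i=\beta_i\}$ (resp.\ $\{i:t\odot\lambda_i=\beta_i\}$) and on $C$ assigns \emph{both} new coordinates the value $\alpha^j_i$, rather than freezing one side. Apart from this correction, your renormalization-by-subtracting-the-max is a clean substitute for the paper's case analysis on the location of the index $i_0$ with $\alpha^j_{i_0}=0$, and the rest of your argument (including the handling of the boundary of $J$) is correct.
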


\begin{proof} Let $X=\{1,\dots,n\}$. Since the functor $I$ preserves the weight \cite{Zar}, the compactum $IX$ is metrizable.  Consider any $(\lambda,\beta,t,p)\in IX\times IX\times J$ and a sequence $(\alpha^j)$ in $IX$ converging to $t\odot \lambda\oplus p\odot \beta$. It is enough to find sequences $(\lambda^j)$, $(\beta^j)$ in $IX$ and a sequence $(t^j,p^j)$ in $J$ such that the sequence $(\lambda^j,\beta^j,t^j,p^j)$ converges to $(\lambda,\beta,t,p)$ and $t^j\odot \lambda^j\oplus p^j\odot\beta^j=\alpha^j$ for each $j\in \N$.

We have $\lambda=\oplus_{i=1}^n\lambda_i\odot\delta_i$, $\beta=\oplus_{i=1}^n\beta_i\odot\delta_i$ and $\alpha^j=\oplus_{i=1}^n\alpha^j_i\odot\delta_i$ where $\lambda_i$, $\beta_i$, $\alpha^j_i\in\R_{\max}$ such that $\oplus_{i=1}^n\lambda_i=\oplus_{i=1}^n\beta_i=\oplus_{i=1}^n\alpha^j_i=0$. Then $t\odot \lambda\oplus p\odot \beta=\oplus_{i=1}^n(t\odot \lambda_i\oplus p\odot\beta_i)\odot\delta_i$ and we have that the sequence $\alpha^j_i$ converges to $t\odot \lambda_i\oplus p\odot\beta_i$ for each $i\in \{1,\dots,n\}$. We can assume (passing to a subsequence if necessary) that there exists $i_0\in\{1,\dots,n\}$ such that $\alpha^j_{i_0}=0$ for each $j\in\N$.

Consider the case $t=p$. Then we have $t=p=0$.  We can represent $X=A\sqcup B\sqcup C$ where $A=\{i\in\{1,\dots,n\}|\lambda_i<\beta_i\}$, $B=\{i\in\{1,\dots,n\}|\lambda_i>\beta_i\}$ and $C=\{i\in\{1,\dots,n\}|\lambda_i=\beta_i\}$. We can assume that $\alpha^j_i>\frac{\lambda_i+\beta_i}{2}$ for each $j\in A\cup B$.

Consider the subcase $i_0\in C$. Then $\lambda_{i_0}=\beta_{i_0}=0$. Put
$$
\lambda^j_i=\begin{cases}
\lambda_i,&i\in A,\\
\alpha^j_i,&i\notin A\end{cases}
$$

and

$$
\beta^j_i=\begin{cases}
\beta_i,&i\in B,\\
\alpha^j_i,&i\notin B\end{cases}
$$

We have $\lambda^j_i\le 0$, $\beta^j_i\le 0$ and $\lambda^j_{i_0}=\beta^j_{i_0}=0$. Put $\lambda^j=\oplus_{i=1}^n\lambda^j_i\odot\delta_i$ and $\beta^j=\oplus_{i=1}^n\beta^j_i\odot\delta_i$. Then the sequence $(\lambda^j,\beta^j,0,0)$ converges to $(\lambda,\beta,0,0)$ and $\lambda^j\oplus \beta^j=\alpha^j$ for each $j\in \N$.

Consider the subcase $i_0\in A$. (The proof is analogous for the subcase $i_0\in B$.)  Put $c^j=\max\{\alpha^j_i|i\notin A\}$. The sequence $(c^j)$ converges to $0$.

Put
$$
\lambda^j_i=\begin{cases}
\lambda_i,&i\in A,\\
\alpha^j_i-c^j,&i\notin A\end{cases}
$$

and

$$
\beta^j_i=\begin{cases}
\beta_i,&i\in B,\\
\alpha^j_i,&i\notin B\end{cases}
$$

We have $\lambda^j_i\le 0$, $\beta^j_i\le 0$ and $\beta^j_{i_0}=0$. We also have  $\lambda^j_{i}=0$ for each $i\notin A$ such that $c^j=\alpha^j_i$. Put $\lambda^j=\oplus_{i=1}^n\lambda^j_i\odot\delta_i$ and $\beta^j=\oplus_{i=1}^n\beta^j_i\odot\delta_i$. Then the sequence $(\lambda^j,\beta^j,c^j,0)$ converges to $(\lambda,\beta,0,0)$ and $c^j\odot \lambda^j\oplus \beta^j=\alpha^j$ for each $j\in \N$.

Finally consider the case $t<p$. (The proof is analogous for the case $p<t$.) We have $p=0$. If $t=-\infty$, the sequence $\alpha^j$ converges to $\beta$. We have that the sequence $(\lambda,\alpha^j,-\infty,0)$ converges to $(\lambda,\beta,-\infty,0)$ and $-\infty\odot\lambda\oplus\alpha^j=\alpha^j$.

Now, consider $t>-\infty$.  We have $X=A\sqcup B\sqcup C$ where $A=\{i\in\{1,\dots,n\}|t\odot\lambda_i<\beta_i\}$, $B=\{i\in\{1,\dots,n\}|t\odot\lambda_i>\beta_i\}$ and $C=\{i\in\{1,\dots,n\}|t\odot\lambda_i=\beta_i\}$. We can assume that $\alpha^j_i>\frac{t+\lambda_i+\beta_i}{2}$ for each $j\in A\cup B$.

We also have  $i_0\in A$ and $\beta_{i_0}=0$. Consider $D=\{i\in X\setminus A|\lambda_i>-\infty\}$. Put $c^j=0$ if $D=\emptyset$. For $D\neq\emptyset$  put $c^j=\max\{\alpha^j_i-t-\lambda_i|i\in D\}$ if there exists $s\in A$ such that $\lambda_s=0$ and $c^j=\max\{\alpha^j_i-t|i\in D\}$ otherwise. The sequence $(c_j)$ converges to $0$.

Put
$$
\lambda^j_i=\begin{cases}
\lambda_i,&i\in A,\\
\alpha^j_i-c^j-t,&i\notin A\end{cases}
$$

and

$$
\beta^j_i=\begin{cases}
\beta_i,&i\in B,\\
\alpha^j_i,&i\notin B\end{cases}
$$

We have $\lambda^j_i\le 0$, $\beta^j_i\le 0$ and $\beta^j_{i_0}=0$. If $\lambda^j_{s}\ne 0$ for each $s\in A$, we  have  $\lambda^j_{i}=0$ for each $i\notin A$ such that $c^j=\alpha^j_i-t$. Put $\lambda^j=\oplus_{i=1}^n\lambda^j_i\odot\delta_i$ and $\beta^j=\oplus_{i=1}^n\beta^j_i\odot\delta_i$. Then the sequence $(\lambda^j,\beta^j,t\odot c^j,0)$ converges to $(\lambda,\beta,t,0)$ and $(c^j\odot t)\odot \lambda^j\oplus \beta^j=\alpha^j$ for each $j\in \N$.
\end{proof}

Let $$ \CD
X_1  @>p>>   X_2 \\
@VVf_1V      @VVf_2V  \\
Y_1  @>q>>   Y_2
\endCD
$$

be a commutative diagram. The map $\chi:X_1\to
X_2\times{}_{Y_2}Y_1= \{(x,y)\in X_2\times Y_1\mid f_2(x)=q(y)\}$
defined by $\chi(x)=(p(x),f_1(x))$ is called a characteristic map
of this diagram. The diagram is called bicommutative  if the map $\chi$ is onto.

\begin{lemma}\label{0dim} The map $s_{IX}$ is open for each 0-dimensional compactum $X$.
\end{lemma}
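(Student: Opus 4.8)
The plan is to reduce the statement to the finite case treated in Lemma~\ref{Fin} by a standard inverse-limit argument. Since $X$ is a $0$-dimensional compactum, it is the limit of an inverse system $\mathcal S=\{X_\alpha,\ p_{\alpha\beta};\ \mathcal A\}$ of finite discrete compacta with all bonding maps $p_{\alpha\beta}\colon X_\alpha\to X_\beta$ and all limit projections $p_\alpha\colon X\to X_\alpha$ surjective. The functor $I$ is continuous, i.e.\ it preserves inverse limits (see \cite{Zar}), so $IX=\varprojlim\{IX_\alpha,\ I(p_{\alpha\beta})\}$ with limit projections $I(p_\alpha)$; since $J$ is a fixed space, also $IX\times IX\times J=\varprojlim\{IX_\alpha\times IX_\alpha\times J\}$ with bonding maps $I(p_{\alpha\beta})\times I(p_{\alpha\beta})\times\id_J$. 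By Lemma~\ref{Comm}, applied to each $p_{\alpha\beta}$, the family $\{s_{IX_\alpha}\}$ is a morphism between these two inverse systems, and its limit map is exactly $s_{IX}$.

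Next I would invoke the standard fact (a routine diagram chase using compactness; see the classical theory of inverse spectra of compacta) that the limit of a bicommutative inverse system of open surjective maps between compacta is again open. Each $s_{IX_\alpha}$ is open by Lemma~\ref{Fin} and surjective, because $s_{IX_\alpha}(\mu,\mu,0,0)=\mu$ for every $\mu\in IX_\alpha$. Hence everything is reduced to proving that, for all $\alpha\ge\beta$ in $\mathcal A$, the square with vertical maps $s_{IX_\alpha}$, $s_{IX_\beta}$ and horizontal maps $I(p_{\alpha\beta})\times I(p_{\alpha\beta})\times\id_J$ and $I(p_{\alpha\beta})$ is bicommutative. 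This verification is the heart of the argument.

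Unwinding the definition of the characteristic map, bicommutativity of that square amounts to the following lifting property: given $\mu\in IX_\alpha$, $\nu_1,\nu_2\in IX_\beta$ and $(t,p)\in J$ with $t\odot\nu_1\oplus p\odot\nu_2=I(p_{\alpha\beta})(\mu)$, there exist $\mu_1,\mu_2\in IX_\alpha$ with $I(p_{\alpha\beta})(\mu_i)=\nu_i$ for $i=1,2$ and $t\odot\mu_1\oplus p\odot\mu_2=\mu$. As $X_\alpha$ and $X_\beta$ are finite, an idempotent measure is just a function into $\R_{\max}$ whose largest value is $0$, and $I(p_{\alpha\beta})$ replaces such a function by its fibrewise maximum along $p_{\alpha\beta}$. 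So the problem decomposes over the fibres of $p_{\alpha\beta}$: over a point $b\in X_\beta$ one must distribute the values $\{\mu(a):a\in p_{\alpha\beta}^{-1}(b)\}$, whose maximum equals $\max(t\odot\nu_1(b),\,p\odot\nu_2(b))$, between two functions on $p_{\alpha\beta}^{-1}(b)$ having prescribed maxima $\nu_1(b)$ and $\nu_2(b)$. Assuming, by symmetry, $t\le p$ (so $p=0$) and $t>-\infty$, one can take $\mu_2(a)=\min(\mu(a),\nu_2(b))$ on this fibre, and $\mu_1(a)=\mu(a)-t$ if $\mu(a)>\nu_2(b)$ while $\mu_1(a)=\min(\nu_1(b),\mu(a)-t)$ otherwise; the degenerate case $t=-\infty$ is handled by $\mu_2=\mu$ together with any preimage $\mu_1$ of $\nu_1$ under $I(p_{\alpha\beta})$. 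A short case analysis — of the same flavour as the one in the proof of Lemma~\ref{Fin} — checks that these functions lie in $IX_\alpha$ and satisfy all the required equalities; gluing the fibrewise data yields $\mu_1,\mu_2$, establishing bicommutativity and finishing the proof.

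The only genuinely non-formal step is this last one: once the system $\mathcal S$ is in place, Lemma~\ref{Comm} and the inverse-limit machinery do all the abstract bookkeeping, but establishing bicommutativity still requires the explicit Max-Plus manipulations of Lemma~\ref{Fin}, now carried out uniformly relative to the partitions of the $X_\alpha$ into the fibres of the bonding maps.
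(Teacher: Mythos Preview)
Your argument is correct and follows essentially the same route as the paper: represent $X$ as an inverse limit of finite compacta, identify $s_{IX}$ as the limit of the open maps $s_{IX_\alpha}$ via Lemma~\ref{Comm}, and reduce (through the standard criterion the paper cites as \cite[Proposition~2.10.9]{TZ}) to checking bicommutativity of the relevant squares by an explicit Max--Plus lifting formula. The only cosmetic difference is that the paper first factors each bonding map through elementary maps collapsing a single pair of points, which lets it write the lifts uniformly as $\lambda_j=\min\{\mu_n,\nu_j-t\}$, $\eta_j=\min\{\alpha_n,\nu_j\}$; your fibrewise formulas agree with these once the redundant case split is removed.
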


\begin{proof} Represent $X$ as
the limit of an inverse system $\C=\{X_\alpha,p_\beta^\alpha,A\}$
consisting of finite compacta and epimorphisms. It is easy to check that  $s_{IX}=\lim\{s_{I(X_\alpha)}\}$.  By Proposition 2.10.9
\cite{TZ} and Lemma \ref{Fin} in order to prove that the map $s_{IX}$ is open, it is
sufficient to prove that the diagram
$$ \CD
I(X_\alpha)\times I(X_\alpha)\times J  @>I(p_\beta^\alpha)\times I(p_\beta^\alpha)\times\id_{J}>>   I(X_\beta)\times I(X_\beta)\times J \\
@VV s_{I(X_\alpha)} V      @VV s_{I(X_\beta)} V\\
I(X_\alpha)  @>I(p_\beta^\alpha)>>  I(X_\beta) \\
\endCD
$$
(which is commutative by Lemma \ref{Comm}) is bicommutative for each $\alpha\ge\beta$.

Without loss of generality, one may assume that $$X_\alpha=\{x_1,\dots,x_{n+1}\},\  X_\beta=\{y_1,\dots,y_{n}\}$$ (all the points are assumed to be distinct) and the map $p_\beta^\alpha$ acts as follows: $p_\beta^\alpha(x_i)=y_i$ for each $i\in\{1,\dots,n\}$ and $p_\beta^\alpha(x_{n+1})=y_n$. Thus, given $(\nu,(\mu, \alpha,t,q))\in I(X_\alpha)\times{}_{I(X_\beta)}I(X_\beta)\times I(X_\beta)\times J$ one can write $\nu=\oplus_{i=1}^{n+1}\nu_i\odot\delta_{x_i}$, $\mu=\oplus_{i=1}^{n}\mu_i\odot\delta_{y_i}$ and $\alpha=\oplus_{i=1}^{n}\alpha_i\odot\delta_{y_i}$.

Consider the case $q=0$, the proof is analogous for the case $t=0$.

Since $I(p_\beta^\alpha)(\nu)=t\odot\mu\oplus\alpha$, we have
$$\nu_i=t\odot\mu_i\oplus\alpha_i,\ i\in\{1,\dots,n-1\}$$
and
$$\nu_n\oplus\nu_{n+1}=t\odot\mu_n\oplus\alpha_n.$$

Put $$\lambda_i=\mu_i,\ \eta_i=\alpha_i,\ i\in\{1,\dots,n-1\},$$
$$\lambda_n=\min\{\mu_n,\nu_n-t\},\ \lambda_{n+1}=\min\{\mu_n,\nu_{n+1}-t\}$$
and
$$\eta_n=\min\{\alpha_n,\nu_n\},\ \eta_{n+1}=\min\{\alpha_n,\nu_{n+1}\}.$$
It is a routine checking  that
$$\lambda_n\oplus\lambda_{n+1}=\mu_n,\ \eta_n\oplus\eta_{n+1}=\alpha_n$$
and
$$\nu_n=t\odot\lambda_n\oplus\eta_n,\ \nu_{n+1}=t\odot\lambda_{n+1}\oplus\eta_{n+1}.$$

Hence we obtain $s_{I(X_\alpha)}(\lambda,\eta,t,0)=\nu$ and $I(p_\beta^\alpha)\times I(p_\beta^\alpha)\times\id_{J}(\lambda,\eta,t,0)=(\mu, \alpha,t,0)$ for $\lambda=\oplus_{i=1}^{n+1}\lambda_i\odot\delta_{x_i}$ and $\eta=\oplus_{i=1}^{n+1}\eta_i\odot\delta_{x_i}$.
\end{proof}

\begin{theorem}\label{Gen} The map $s_{IX}$ is open for each  compactum $X$.
\end{theorem}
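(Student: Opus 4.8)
The plan is to deduce the general case from the $0$-dimensional case by transporting openness along a $0$-dimensional resolution. Fix a continuous surjection $f\colon Z\to X$ with $Z$ a $0$-dimensional compactum (every compactum admits one). Then $s_{IZ}$ is open by Lemma~\ref{0dim}. Next, $I(f)\colon IZ\to IX$ is surjective: for $\mu=\oplus_{i=1}^{n}\lambda_i\odot\delta_{x_i}\in I_\omega X$ and arbitrary $z_i\in f^{-1}(x_i)$ one has $I(f)(\oplus_{i=1}^{n}\lambda_i\odot\delta_{z_i})=\mu$, so $I(f)(IZ)$ is a closed subset of $IX$ containing the dense set $I_\omega X$, i.e.\ $I(f)$ is onto; being a continuous surjection of compacta it is then a closed, hence a quotient, map. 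Finally, Lemma~\ref{Comm}, applied to $f\colon Z\to X$, yields a commutative square with bottom arrow $I(f)$, left arrow $s_{IZ}$, right arrow $s_{IX}$, and (surjective) top arrow $g':=I(f)\times I(f)\times\id_{J}$.

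Given an open set $U\subseteq IX\times IX\times J$, put $W:=(g')^{-1}(U)$; it is open and $g'$-saturated (i.e.\ $W=(g')^{-1}(g'(W))$, since $g'$ is onto), and by commutativity $s_{IX}(U)=I(f)(s_{IZ}(W))$. Now $s_{IZ}(W)$ is open because $s_{IZ}$ is open, and \emph{if} $s_{IZ}(W)$ is $I(f)$-saturated, then $s_{IX}(U)=I(f)(s_{IZ}(W))$ is open because $I(f)$ is a quotient map --- which finishes the proof. Unravelling the definitions, the statement that $s_{IZ}$ carries $g'$-saturated open sets to $I(f)$-saturated sets is precisely the bicommutativity of the square: whenever $\nu\in IZ$ and $(\mu_1,\mu_2,t,q)\in IX\times IX\times J$ satisfy $I(f)(\nu)=t\odot\mu_1\oplus q\odot\mu_2$, there must exist $\nu_1,\nu_2\in IZ$ with $I(f)(\nu_i)=\mu_i$ and $t\odot\nu_1\oplus q\odot\nu_2=\nu$. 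So the theorem is reduced to this lifting assertion, the technical heart of the matter.

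To prove the lifting I would pass to the representation of an idempotent measure $\mu$ by its density: an upper semicontinuous $m\colon X\to[-\infty,0]$ with $\sup m=0$ and $\mu(\varphi)=\sup_{x\in X}(\varphi(x)+m(x))$. In these terms $I(f)$ pushes a density on $Z$ forward by taking fibrewise suprema over $f$, and the hypothesis $I(f)(\nu)=t\odot\mu_1\oplus q\odot\mu_2$ becomes $\sup_{z\in f^{-1}(x)}n(z)=\max\{t+m_1(x),\,q+m_2(x)\}$ for all $x\in X$, where $n,m_1,m_2$ are the densities of $\nu,\mu_1,\mu_2$. Define $n_1(z):=\min\{m_1(f(z)),\,n(z)-t\}$ and $n_2(z):=\min\{m_2(f(z)),\,n(z)-q\}$ (with the usual conventions when $t$ or $q$ equals $-\infty$). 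A routine computation --- using $\sup_z\min\{c,\phi(z)\}=\min\{c,\sup_z\phi(z)\}$ and $\max\{\min\{a,c\},\min\{b,c\}\}=\min\{\max\{a,b\},c\}$ together with the displayed relation --- shows that $n_1,n_2$ are upper semicontinuous, take values in $[-\infty,0]$, and satisfy $\sup_{z\in f^{-1}(x)}n_i(z)=m_i(x)$ as well as $\max\{t+n_1(z),\,q+n_2(z)\}=n(z)$ for every $z$. Hence the measures $\nu_1,\nu_2$ with densities $n_1,n_2$ lie in $IZ$, are carried to $\mu_1,\mu_2$ by $I(f)$, and satisfy $t\odot\nu_1\oplus q\odot\nu_2=\nu$. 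These are exactly the $\min$-formulas already appearing inside the proof of Lemma~\ref{0dim}, now written over the fibers of $f$ rather than over two points.

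I expect the bicommutativity step to be the main obstacle. The lattice algebra above is routine once one works with densities, but one must verify with care that $n_1,n_2$ are genuine densities of elements of $IX$ --- upper semicontinuity and the normalization $\sup n_i=0$ --- which is precisely why I go through the density representation. If one prefers to stay within the language of the excerpt and avoid densities, the same construction can be performed on the dense subsets of finite-support measures and then extended by continuity, the delicate point then being the simultaneous, compatible approximation of $\nu$ and of the $\mu_i$. (One could even bypass bicommutativity altogether if a $0$-dimensional resolution $f$ with $I(f)$ open were available, since then $s_{IX}(U)=I(f)\bigl(s_{IZ}((g')^{-1}(U))\bigr)$ is open directly; but bicommutativity, being a direct generalization of what is already done for finite spaces in Lemma~\ref{0dim}, looks like the more natural route.)
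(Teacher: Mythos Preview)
Your argument is correct and takes a genuinely different route from the paper's. The paper exploits an \emph{idempotent Milyutin map}: a surjection $f\colon Y\to X$ from a $0$-dimensional compactum together with a continuous section $l\colon X\to IY$ of $If$ (whose existence is quoted from \cite{Zar}). From $l$ one builds an embedding $\gamma=\beta_{IY}\circ Il\colon IX\hookrightarrow IY$ with $If\circ\gamma=\id_{IX}$, so $IX$ sits inside $IY$ as a retract. Then $T:=s_{IY}^{-1}(\gamma(IX))$ is a full preimage, $s_{IY}|_T$ is open, and Lemma~\ref{Comm} factors it as $s_{IX}\circ(If\times If\times\id_J)|_T$; openness of $s_{IX}$ follows because a left divisor of an open map is open. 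No bicommutativity is needed, only the section.

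You, instead, take an \emph{arbitrary} $0$-dimensional resolution $f\colon Z\to X$ and compensate for the absence of a section by proving bicommutativity of the square directly, via densities. Your formulas $n_i(z)=\min\{m_i(f(z)),\,n(z)-t_i\}$ are the natural fibrewise extension of the $\min$-formulas already appearing in Lemma~\ref{0dim}, and your verifications (push-forward to $m_i$, reconstruction of $n$, upper semicontinuity, normalization) all go through; the only cosmetic slip is that the $n_i$ are densities of elements of $IZ$, not $IX$. What you gain is independence from the Milyutin machinery and a conceptually clean statement---the square over \emph{any} surjection is bicommutative---which is of interest in its own right. What the paper gains is brevity: once the Milyutin section is available, the proof is three lines and avoids densities altogether.
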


\begin{proof} Choose a continuous onto map $f:Y\to X$ such that $Y$ is a 0-dimensional compactum and there exists a continuous $l:X\to IY$ such that $If\circ l=\delta X$. Existence of such map was proved in \cite{Zar}. (It is called an idempotent Milyutin map.)

Define a map $\gamma:IX\to IY$ by the formula $\gamma=\beta_{IY}\circ Il$. Then we have $If\circ \gamma=If\circ\beta_{IY}\circ Il=\beta_{IX}\circ I^2f\circ Il=\beta_{IX}\circ I(If\circ l)=\beta_{IX}\circ I(\delta X)=\id_{IX}$. Since $I$ preserves surjective maps, $\gamma$ is an embedding and we can consider $IX$ as a subset of $IY$. (We identify $IX$ with $\gamma(IX)$).

Put $T=s_{IY}^{-1}(IX)$. The map $s_{IY}|_T:T\to IX$ is open. The equality $s_{IY}|_T=s_{IX}\circ(If\times If\times\id_{J}|_T)$ follows from Lemma \ref{Comm} and the equality $If\circ \gamma=\id_{IX}$. Hence $s_{IX}$ is open being a left divisor of the open map $s_{IY}|_T$.
\end{proof}

\section{The main result}

We  characterize openness of the barycenter map in this section. Since the set $I_\omega X$  is dense in $IX$,  the following lemma can be obtained by direct checking for  idempotent  measures of finite support.

\begin{lemma}\label{Com} The equality $\beta_X\circ s_{IX}=s_X\circ(\beta_X\times\beta_X\times\id_J)$ holds for each max-plus convex compactum $X$.
\end{lemma}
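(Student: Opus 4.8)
The plan is to exploit the density of $I_\omega X$ in $IX$ together with the continuity of $\beta_X$, $s_{IX}$ and $s_X$: it suffices to check the identity $\beta_X\circ s_{IX}=s_X\circ(\beta_X\times\beta_X\times\id_J)$ on the dense subset $I_\omega X\times I_\omega X\times J$ of $IX\times IX\times J$. The computational input I would isolate first is the action of $\beta_X$ on a finite idempotent measure. Fixing the embedding $X\subset\R^T$ and writing $f_t=\pr_t|_X$, for $x\in X$ one has $\pr_t(\beta_X(\delta_x))=\delta_x(f_t)=f_t(x)=\pr_t(x)$, so $\beta_X(\delta_x)=x$; more generally, for $\sigma=\oplus_{i=1}^n\lambda_i\odot\delta_{x_i}\in I_\omega X$ (with $x_i\in X$, $\lambda_i\in\R_{\max}$, $\oplus_{i=1}^n\lambda_i=0$) and each $t\in T$,
$$\pr_t\bigl(\beta_X(\sigma)\bigr)=\sigma(f_t)=\max_{1\le i\le n}\bigl(\lambda_i+\pr_t(x_i)\bigr)=\pr_t\Bigl(\oplus_{i=1}^n\lambda_i\odot x_i\Bigr),$$
where the coordinatewise maximum $\oplus_{i=1}^n\lambda_i\odot x_i$ lies in $X$ by Max-Plus convexity. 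Hence $\beta_X(\sigma)=\oplus_{i=1}^n\lambda_i\odot x_i$.

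Now take $\lambda=\oplus_{i=1}^n\lambda_i\odot\delta_{x_i}$, $\mu=\oplus_{k=1}^m\mu_k\odot\delta_{y_k}$ in $I_\omega X$ and $(t,p)\in J$. Then
$$s_{IX}(\lambda,\mu,t,p)=t\odot\lambda\oplus p\odot\mu=\Bigl(\oplus_{i=1}^n(t\odot\lambda_i)\odot\delta_{x_i}\Bigr)\oplus\Bigl(\oplus_{k=1}^m(p\odot\mu_k)\odot\delta_{y_k}\Bigr),$$
and this is again an element of $I_\omega X$, since the maximum of its coefficients is $t\oplus p=0$. Applying the previous formula and the distributivity of $\odot$ over $\oplus$ in $\R^T$,
$$\beta_X\bigl(s_{IX}(\lambda,\mu,t,p)\bigr)=\Bigl(\oplus_{i=1}^n(t\odot\lambda_i)\odot x_i\Bigr)\oplus\Bigl(\oplus_{k=1}^m(p\odot\mu_k)\odot y_k\Bigr)=t\odot\beta_X(\lambda)\oplus p\odot\beta_X(\mu),$$
which is exactly $s_X(\beta_X(\lambda),\beta_X(\mu),t,p)$. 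The degenerate cases $t=-\infty$ or $p=-\infty$ are immediate from the conventions, as then one side reduces to $\mu$ (resp. $\lambda$) and the other to $\beta_X(\mu)$ (resp. $\beta_X(\lambda)$).

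Finally, since $\beta_X\circ s_{IX}$ and $s_X\circ(\beta_X\times\beta_X\times\id_J)$ are continuous maps on $IX\times IX\times J$ agreeing on the dense set $I_\omega X\times I_\omega X\times J$, they coincide, which yields the lemma. I do not anticipate a genuine obstacle here; the only points demanding slight care are the bookkeeping with $-\infty$ coefficients and confirming at each step that the relevant finite combinations still have coefficient-maximum $0$, so that one never leaves $I_\omega X$ and the formula for $\beta_X$ applies.
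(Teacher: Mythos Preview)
Your proposal is correct and follows exactly the approach the paper indicates: the paper does not write out a proof but merely remarks that, since $I_\omega X$ is dense in $IX$, the lemma can be obtained by direct checking for idempotent measures of finite support. You have simply filled in the details of that direct checking and the density/continuity argument.
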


\begin{corollary}\label{afin} Let $X$ be a max-plus convex compactum, $\mu_1,\dots,\mu_k\in IX$ and $\lambda_1,\dots,\lambda_k\in [-\infty,0]$ be numbers such that $\max\{\lambda_1,\dots,\lambda_k\}=0$.  Then we have $\beta_X(\oplus_{i=1}^k\lambda_i\odot\mu_i)=\oplus_{i=1}^k\lambda_i\odot\beta_X(\mu_i)$.
\end{corollary}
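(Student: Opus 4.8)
The plan is to derive the corollary from Lemma~\ref{Com} by induction on $k$. For $k=1$ the hypothesis forces $\lambda_1=0$, so both sides reduce to $\beta_X(\mu_1)$ (recall $0\odot\nu=\nu$), and the case $k=2$ is exactly Lemma~\ref{Com} evaluated at the point $(\mu_1,\mu_2,\lambda_1,\lambda_2)\in I(X)\times I(X)\times J$; note $(\lambda_1,\lambda_2)\in J$ because $\lambda_1,\lambda_2\le 0$ and $\lambda_1\oplus\lambda_2=0$. So I would assume $k\ge 3$ and that the statement holds for $k-1$ measures.

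For the inductive step I would first reindex so that $\lambda_k=0$, and set $c=\oplus_{i=1}^{k-1}\lambda_i$. If $c=-\infty$, then every $\lambda_i$ with $i<k$ equals $-\infty$, so, computing coordinatewise in $\R^{C(X)}$ and using the convention $-\infty\odot x=-\infty$, both $\oplus_{i=1}^k\lambda_i\odot\mu_i$ and $\oplus_{i=1}^k\lambda_i\odot\beta_X(\mu_i)$ collapse to $\mu_k$ and $\beta_X(\mu_k)$ respectively, and we are done. If $c>-\infty$, I would renormalize: put $\lambda_i'=\lambda_i-c\le 0$ for $i<k$, so that $\oplus_{i=1}^{k-1}\lambda_i'=0$ and hence $\nu:=\oplus_{i=1}^{k-1}\lambda_i'\odot\mu_i$ is a genuine element of $IX$; the induction hypothesis then yields $\beta_X(\nu)=\oplus_{i=1}^{k-1}\lambda_i'\odot\beta_X(\mu_i)$. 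Finally, since $\oplus_{i=1}^k\lambda_i\odot\mu_i=c\odot\nu\oplus\mu_k=s_{IX}(\nu,\mu_k,c,0)$ with $(c,0)\in J$, Lemma~\ref{Com} gives
$$\beta_X\Big(\oplus_{i=1}^k\lambda_i\odot\mu_i\Big)=s_X\big(\beta_X(\nu),\beta_X(\mu_k),c,0\big)=c\odot\beta_X(\nu)\oplus\beta_X(\mu_k),$$
and substituting the formula for $\beta_X(\nu)$ and using $c\odot\lambda_i'=\lambda_i$ for $i<k$ together with $\lambda_k=0$ turns the right-hand side into $\oplus_{i=1}^k\lambda_i\odot\beta_X(\mu_i)$, completing the induction.

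The step I expect to be the only real obstacle is the renormalization just described: splitting a $k$-fold Max-Plus combination as a binary combination of one measure with a $(k-1)$-fold combination forces one to divide out the factor $c=\oplus_{i=1}^{k-1}\lambda_i$ so that the shorter combination is a normalized idempotent measure, and to dispose of the degenerate case $c=-\infty$ by hand; everything else is a routine unwinding of the definitions of $\oplus$, $\odot$ and $s_{IX}$. I would also note in passing that the induction can be bypassed altogether: writing $X\subset\R^T$ and $f_t=\pr_t|_X$, for $\mu=\oplus_{i=1}^k\lambda_i\odot\mu_i\in IX$ one has, for every $t\in T$, $\pr_t(\beta_X(\mu))=\mu(f_t)=\oplus_{i=1}^k\big(\lambda_i\odot\mu_i(f_t)\big)=\oplus_{i=1}^k\big(\lambda_i\odot\pr_t(\beta_X(\mu_i))\big)=\pr_t\big(\oplus_{i=1}^k\lambda_i\odot\beta_X(\mu_i)\big)$, and the point on the right lies in $X$ since $X$ is Max-Plus convex; but the inductive route stays closer to the spirit of this section.
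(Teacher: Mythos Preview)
Your proof is correct and follows exactly the route the paper intends: the corollary is stated immediately after Lemma~\ref{Com} with no explicit argument, and your induction on $k$---peeling off one measure and renormalizing the remaining $k-1$ so that Lemma~\ref{Com} applies to the resulting binary combination---is precisely how one unpacks that implication. Your alternative coordinatewise computation at the end is also correct and amounts to a direct verification from the definition of $\beta_X$.
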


The notion of density for an idempotent measure was introduced in \cite{A}. Let $\mu\in IX$. Then we can define a function $d_\mu:X\to [-\infty,0]$ by the formula $d_\mu(x)=\inf\{\mu(\varphi)|\varphi\in C(X)$ such that $\varphi\le 0$ and $\varphi(x)=0\}$, $x\in X$. The function $d_\mu$ is upper semicontinuous and is called the density of $\mu$. Conversely, each upper semicontinuous function $f:X\to [-\infty,0]$ with $\max f = 0$ determines an idempotent measure $\nu_f$
by the formula $\nu_f(\varphi) = \max\{f(x)\odot\varphi(x) | x \in X\}$, for $\varphi\in C(X)$.

\begin{lemma}\label{dens} Let $X$ be a max-plus convex compactum, $\mu\in IX$ and $U$ be an open neighborhood of $\mu$. Then there exists $\nu\in I_\omega X\cap U$ such that $\beta_X(\nu)=\beta_X(\mu)$.
\end{lemma}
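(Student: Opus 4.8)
The plan is to represent $\mu$ by its density, reduce to a finite-dimensional situation via the inverse-limit presentation of $X$, settle that case by an explicit construction, and then lift. Write $\mu=\nu_{d}$ with $d=d_{\mu}$, so $\mu(\varphi)=\max_{x\in X}\bigl(d(x)\odot\varphi(x)\bigr)$; put $b=\beta_{X}(\mu)$, so that $\pr_{t}(b)=\mu(f_{t})=\max_{x\in X}\bigl(d(x)+\pr_{t}(x)\bigr)$ for every $t$, and hence $d(x)\odot x\le b$ coordinatewise for all $x\in X$. By Corollary~\ref{afin} together with $\beta_{X}(\delta_{x})=x$, a finitely supported measure $\oplus_{i}\lambda_{i}\odot\delta_{x_{i}}$ has barycenter $\oplus_{i}\lambda_{i}\odot x_{i}$, so it suffices to produce a finite system $(x_{i},\lambda_{i})$ with $\max_{i}\lambda_{i}=0$, with $\oplus_{i}\lambda_{i}\odot x_{i}=b$, and with $\oplus_{i}\lambda_{i}\odot\delta_{x_{i}}\in U$. (Note that the naive device of approximating $\mu$ by a nearby finitely supported measure and then forcing the barycenter by adjoining the Dirac term $\delta_{b}$ does not work: $\delta_{b}$ need not be close to $\mu$, since $d_{\mu}(b)$ may be $-\infty$.)

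First I would treat the case $X\subseteq\R^{n}$. For each coordinate $j$ choose $y_{j}\in X$ with $d(y_{j})+\pr_{j}(y_{j})=\pr_{j}(b)$ (the maximum is attained, $d$ being upper semicontinuous on a compactum); fix $x_{0}$ with $d(x_{0})=0$; and let $N$ consist of a point of maximal $d$-value picked in each member of a sufficiently fine finite closed cover of $X$. With $G=\{y_{1},\dots,y_{n},x_{0}\}\cup N$ set $\nu=\oplus_{x\in G}d(x)\odot\delta_{x}$; since $d(x_{0})=0$ and no weight exceeds $0$, $\nu\in I_{\omega}X$. As $d(x)\odot x\le b$ for all $x$ we get $\beta_{X}(\nu)=\oplus_{x\in G}d(x)\odot x\le b$, while $\pr_{j}(\beta_{X}(\nu))\ge d(y_{j})+\pr_{j}(y_{j})=\pr_{j}(b)$; hence $\beta_{X}(\nu)=b$. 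Moreover $\nu\le\mu$ (no weight exceeds $d$), and the fineness of the cover keeps $\nu(\varphi)$ within a prescribed $\varepsilon$ of $\mu(\varphi)$ on any prescribed finite family of $\varphi$'s, so $\nu$ can be placed in $U$.

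For general $X\subseteq\R^{T}$ I would use $X=\varprojlim\{X_{S},\pi^{S}_{S'}\}$ over the finite $S\subseteq T$, with $X_{S}=\pi_{S}(X)$ and $\pi_{S}\colon X\to X_{S}$ the (max-plus affine) projection; then $IX=\varprojlim IX_{S}$ since $I$ commutes with inverse limits, and the naturality square $\pi_{S}\circ\beta_{X}=\beta_{X_{S}}\circ I\pi_{S}$ holds (verify it on finitely supported measures via Corollary~\ref{afin}, then extend by continuity). Pick a finite $S$ and an open $V\ni\mu_{S}:=I\pi_{S}(\mu)$ in $IX_{S}$ with $(I\pi_{S})^{-1}(V)\subseteq U$, and apply the previous step to $\mu_{S}$: it gives $\nu_{S}=\oplus_{p\in G_{S}}d_{S}(p)\odot\delta_{p}\in I_{\omega}X_{S}\cap V$ ($d_{S}$ the density of $\mu_{S}$, $G_{S}$ finite) with $\beta_{X_{S}}(\nu_{S})=\beta_{X_{S}}(\mu_{S})=\pi_{S}(b)$. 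To lift, for each $p\in G_{S}$ set $Q_{p}=\{\,y\in X:\pi_{S}(y)=p,\ d_{S}(p)\odot y\le b\,\}$; this is nonempty (the $\pi_{S}$-fibre over $p$ contains a point $y$ with $d(y)=d_{S}(p)$, and there $d_{S}(p)\odot y=d(y)\odot y\le b$), compact, max-plus convex, closed under $\oplus$ and bounded above, hence has a greatest element $\bar q_{p}$. Put $\nu=\oplus_{p\in G_{S}}d_{S}(p)\odot\delta_{\bar q_{p}}\in I_{\omega}X$: since $\pi_{S}(\bar q_{p})=p$ we get $I\pi_{S}(\nu)=\nu_{S}\in V$, so $\nu\in U$; and $d_{S}(p)\odot\bar q_{p}\le b$ gives $\beta_{X}(\nu)\le b$, while $\pi_{S}(\beta_{X}(\nu))=\beta_{X_{S}}(\nu_{S})=\pi_{S}(b)$, so $\beta_{X}(\nu)$ already agrees with $b$ on the coordinates in $S$. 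The main obstacle is the remaining equality $\beta_{X}(\nu)=b$ on the coordinates $t\notin S$: one must show that for each such $t$ the greatest element $\bar q_{p}$ of the appropriate $Q_{p}$ reaches the only bound constraining its $t$-coordinate, namely $\pr_{t}(b)-d_{S}(p)$. Morally this should hold because $\pi_{S}^{-1}(p)$ leaves the coordinates outside $S$ unconstrained, so $\bar q_{p}$ saturates them subject only to $y\le d_{S}(p)^{-1}\odot b$; but this has to be arranged by taking the net inside $G_{S}$ fine enough that the $\pi_{S}$-image of a maximizer of $d(\cdot)+\pr_{t}(\cdot)$ is governed by a net point, and one must control the interplay between that bound and the fibrewise normalisation $d_{S}(p)$ — this is the technical heart of the proof. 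Granting it, $\nu\in I_{\omega}X\cap U$ with $\beta_{X}(\nu)=\beta_{X}(\mu)$, which is what is wanted.
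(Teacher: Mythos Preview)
Your finite-dimensional step is fine, but the inverse-limit reduction has a real gap, which you yourself flag and then simply grant. After projecting to $X_{S}$ and lifting each support point $p\in G_{S}$ to $\bar q_{p}\in X$, you need $\oplus_{p}d_{S}(p)\odot\bar q_{p}=b$ in every coordinate $t\notin S$. You suggest that $\bar q_{p}$ ``saturates'' the non-$S$ coordinates up to $\pr_{t}(b)-d_{S}(p)$; but that is not true in general, because $\bar q_{p}$ must lie in the fibre $\pi_{S}^{-1}(p)\subseteq X$, and that fibre can be small (even a single point). Concretely: if $z\in X$ maximises $d(\cdot)+\pr_{t}(\cdot)$, its projection $\pi_{S}(z)$ need not belong to $G_{S}$, and even for a nearby $p\in G_{S}$ there is no reason the greatest element of $Q_{p}$ recovers the $t$-coordinate of $z$. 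Your proposed remedy---refine the net inside $G_{S}$---cannot work uniformly: $S$ is fixed once and for all by the neighbourhood $U$, while there are in general infinitely many coordinates $t\notin S$, each potentially demanding its own refinement. So as written the argument does not close.

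The paper sidesteps this entirely by working intrinsically in $X$, with no projection to finite dimensions. Take a finite closed Max-Plus convex cover $\{U_{1},\dots,U_{k}\}$ of $X$, set $s_{i}=\max_{U_{i}}d_{\mu}$, and let $\mu_{i}$ have density $(d_{\mu}-s_{i})|_{U_{i}}$ (extended by $-\infty$ off $U_{i}$). Then $\mu=\oplus_{i}s_{i}\odot\mu_{i}$ exactly, so by Corollary~\ref{afin} the finitely supported $\nu_{\mathcal U}=\oplus_{i}s_{i}\odot\delta_{\beta_{X}(\mu_{i})}$ has barycentre $\beta_{X}(\mu)$ on the nose, in every coordinate at once; refining the cover then sends $\nu_{\mathcal U}\to\mu$. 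The key idea you are missing is to replace each piece $\mu_{i}$ by the Dirac at its \emph{barycentre}, rather than trying to lift Dirac masses back through a projection.
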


\begin{proof} By $d_\mu$ we denote the density of  $\mu$. Let $\U=\{U_1,\dots,U_k\}$ be a closed max-plus convex cover of $X$. For $i\in\{1,\dots,k\}$ put $s_i=\max\{d_\mu(y)\mid y\in U_i\}$ and define a function $d_i:X\to [-\infty,0]$ by the formula $$d_i(x)=\begin{cases}
d_\mu(x)-s_i,&x\in U_i,\\
-\infty,&x\notin U_i.\end{cases}$$
It is easy to check that $d_i$ is an upper semicontinuous function  with $\max d_i = 0$. Denote by $\mu_i$ the idempotent measure determined  by $d_i$ and put $x_i=\beta_X(\mu_i)$.

Define $\nu_{\U}\in I_\omega X$ by the formula $\nu_{\U}=\oplus_{i=1}^ks_i\odot\delta_{x_i}$. By Corollary \ref{afin} we have $\beta_X(\nu_{\U})=\beta_X(\oplus_{i=1}^ks_i\odot\mu_i)$. Since $\oplus_{i=1}^ks_i\odot\mu_i=\mu$, we obtain $\beta_X(\nu_{\U})=\beta_X(\mu)$.

Now $\{\nu_{\U}\}$ forms a net where the set of all finite closed max-plus convex covers is ordered by refinement. Then $\nu_{\U}\rightarrow\mu$.
\end{proof}

\begin{theorem}\label{main} Let $X$ be a max-plus convex compactum. Then the following statements are equivalent:
\begin{enumerate}
\item the map $\beta_X|_{I_\omega X}:I_\omega X\to X$ is open;
\item the map $\beta_X$ is open;
\item the map $s_X$ is open.
\end{enumerate}
\end{theorem}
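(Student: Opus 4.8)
The plan is to establish $(1)\Leftrightarrow(2)$, then $(2)\Rightarrow(3)$, and finally $(3)\Rightarrow(1)$. The first two implications will be formal consequences of the lemmas already proved, while the genuine work is in $(3)\Rightarrow(1)$. Two trivial facts will be used repeatedly. First, $\beta_X\circ\delta X=\id_X$ (immediate from the definition of $\beta_X$, since $\delta_x(f_t)=f_t(x)=\pr_t(x)$), so $\beta_X$ is a retraction and in particular surjective. Second, if $h:A\to B$ is a continuous surjection and $g\circ h:A\to C$ is open, then $g$ is open: for open $V\subseteq B$ the set $h^{-1}(V)$ is open, and $(g\circ h)(h^{-1}(V))=g(h(h^{-1}(V)))=g(V)$ is open.

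For $(1)\Leftrightarrow(2)$, the one thing needed is that Lemma~\ref{dens} yields $\beta_X(U)=\beta_X(U\cap I_\omega X)$ for every open $U\subseteq IX$: the inclusion $\supseteq$ is clear, and if $y=\beta_X(\mu)$ with $\mu\in U$, then Lemma~\ref{dens} produces $\nu\in I_\omega X\cap U$ with $\beta_X(\nu)=y$. Given this, $(1)\Rightarrow(2)$ follows because $U\cap I_\omega X$ is open in $I_\omega X$, so $\beta_X(U)=\beta_X(U\cap I_\omega X)$ is open; and $(2)\Rightarrow(1)$ follows because every open $V\subseteq I_\omega X$ equals $U\cap I_\omega X$ for some open $U\subseteq IX$, whence $\beta_X(V)=\beta_X(U)$ is open.

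For $(2)\Rightarrow(3)$, Lemma~\ref{Com} gives $s_X\circ(\beta_X\times\beta_X\times\id_J)=\beta_X\circ s_{IX}$. By Theorem~\ref{Gen} the map $s_{IX}$ is open, and $\beta_X$ is open by $(2)$, so the right-hand side, hence also $s_X\circ(\beta_X\times\beta_X\times\id_J)$, is open. Since $\beta_X\times\beta_X\times\id_J$ is a continuous surjection (as $\beta_X$ is surjective), the second fact above shows $s_X$ is open.

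The main obstacle is $(3)\Rightarrow(1)$, where I would first reduce $\beta_X|_{I_\omega X}$ to finite-stage max-plus combination maps. For $n\in\N$ put $J_n=\{(\lambda_1,\dots,\lambda_n)\in[-\infty,0]^n\mid\oplus_{i=1}^n\lambda_i=0\}$ and $I_\omega^{(n)}X=\{\oplus_{i=1}^n\lambda_i\odot\delta_{x_i}\mid x_i\in X,\ (\lambda_i)\in J_n\}$, and let $\pi_n:X^n\times J_n\to I_\omega^{(n)}X$, $c_n:X^n\times J_n\to X$ be given by $\pi_n(x_1,\dots,x_n,\lambda)=\oplus_{i=1}^n\lambda_i\odot\delta_{x_i}$ and $c_n(x_1,\dots,x_n,\lambda)=\oplus_{i=1}^n\lambda_i\odot x_i$. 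Then $\pi_n$ is a continuous surjection and, by Corollary~\ref{afin} together with $\beta_X(\delta_{x_i})=x_i$, one has $c_n=\beta_X|_{I_\omega^{(n)}X}\circ\pi_n$; hence openness of $c_n$ implies openness of $\beta_X|_{I_\omega^{(n)}X}$. In turn, openness of $\beta_X|_{I_\omega^{(n)}X}$ for all $n$ implies openness of $\beta_X|_{I_\omega X}$: for $V$ open in $I_\omega X$ and $y_0=\beta_X(\nu)$ with $\nu\in V$, pick $n$ with $\nu\in I_\omega^{(n)}X$; then $\beta_X(V\cap I_\omega^{(n)}X)$ is an open neighborhood of $y_0$ contained in $\beta_X(V)$. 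So everything comes down to proving that $(3)$ implies each $c_n$ is open, which I would do by induction on $n$, the base case being $c_2=s_X$ (with $c_1$ a homeomorphism). For the inductive step, on the open set $\{(x_1,\dots,x_n,\lambda)\in X^n\times J_n\mid m:=\oplus_{i=1}^{n-1}\lambda_i>-\infty\}$ the assignment $(x_1,\dots,x_n,\lambda)\mapsto(x_1,\dots,x_{n-1},\lambda_1-m,\dots,\lambda_{n-1}-m,x_n,m,\lambda_n)$ is a homeomorphism onto an open subset of $X^{n-1}\times J_{n-1}\times X\times J$, and precomposing the map $s_X\circ(c_{n-1}\times\id_X\times\id_J)$ (open, since $c_{n-1}$ is open by induction, $s_X$ is open by $(3)$, and products of open maps are open) with this homeomorphism recovers $c_n$ on that set and is therefore open there; on the complementary locus $X^n\times\{(-\infty,\dots,-\infty,0)\}$ the map $c_n$ is just the projection to $x_n$. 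Combining the two pieces (given an open $W$ and a point $q\in W$, one reads off from whichever piece contains $q$ an open neighborhood of $c_n(q)$ inside $c_n(W)$) shows $c_n$ is open. The delicate part is exactly this patching at the degenerate locus where intermediate weights equal $-\infty$; there, however, $c_n$ collapses to a coordinate projection, so it is the easy case, and the real content is the single inductive step that factors an $n$-fold combination through one application of $s_X$ and an $(n-1)$-fold combination.
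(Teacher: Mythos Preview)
Your proof is correct and follows the paper's route: Lemma~\ref{dens} for $(1)\Leftrightarrow(2)$, Lemma~\ref{Com} together with Theorem~\ref{Gen} for $(2)\Rightarrow(3)$, and an induction on the number of support points for $(3)\Rightarrow(1)$. The only difference is cosmetic: the paper runs the induction directly with nets (and sidesteps your degenerate locus by assuming without loss of generality that some $\lambda_i=0$ among the first $l$ weights), whereas you package the same induction as openness of the combination maps $c_n$ and handle the boundary case $\lambda_1=\cdots=\lambda_{n-1}=-\infty$ separately via the coordinate projection.
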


\begin{proof} The implication 1.$\Rightarrow$ 2. follows from Lemma \ref{dens}.

2.$\Rightarrow$ 3. Consider any $(x,y,t)\in X\times X\times J$ and let $W$ be an open neighborhood of $(x,y,t)$. We can suppose that $W=V\times U\times O$ where $V$, $U$ and $O$ are open neighborhoods of $x$, $y$ and $t$ in $X$, $X$ and $J$ correspondingly. Since the map $s_{IX}$ is open by Theorem \ref{Gen}, the set $s_{IX}(\beta_X^{-1}(V)\times\beta_X^{-1}(U)\times O)$ is open in $IX$. Then $\beta_X\circ s_{IX}(\beta_X^{-1}(V)\times\beta_X^{-1}(U)\times O)$ is open in $X$.

Let us show that $\beta_X\circ s_{IX}(\beta_X^{-1}(V)\times\beta_X^{-1}(U)\times O)=s_X(V\times U\times O)$. Consider any $y\in\beta_X\circ s_{IX}(\beta_X^{-1}(V)\times\beta_X^{-1}(U)\times O)$. Then there exists $(\mu,\nu,p)\in \beta_X^{-1}(V)\times\beta_X^{-1}(U)\times O$ such that $\beta_X\circ s_{IX}(\mu,\nu,p)=y$. It follows from Lemma \ref{Com} that $\beta_X\circ s_{IX}(\mu,\nu,p)=s_X(\beta_X(\mu),\beta_X(\nu),p)$, hence $y\in s_X(V\times U\times O)$.

Now take any $z\in s_X(V\times U\times O)$. Then there exists  $(r,q,p)\in V\times U\times O$ such that $z=s_X(r,q,p)$. By Lemma \ref{Com} we have $z=\beta_X\circ s_{IX}(\delta_r,\delta_q,p)$. Hence $z\in \beta_X\circ s_{IX}(\beta_X^{-1}(V)\times\beta_X^{-1}(U)\times O)$.

3.$\Rightarrow$ 1. Consider any  $\nu=\oplus_{i=1}^k\lambda_i\odot\delta_{x_i}\in I_\omega X$. We will prove that for each net  $\{x^\alpha\}$ converging to $\beta_X(\nu)$ there exists a net $\{\nu^\alpha\}$ converging to $\nu$ such that $\beta_X(\nu^\alpha)=x^\alpha$ for each $\alpha$.

We use the induction by $k$. For $k=1$ the statement is obvious. Let us assume that we have proved the statement for each $k\le l\ge 1$.

Consider $k=l+1$. Then  $\nu=\oplus_{i=1}^{l+1}\lambda_i\odot\delta_{x_i}$. We can assume that there exists $i\in \{1,\dots,l\}$ such that $\lambda_i=0$. Put  $\nu_1=\oplus_{i=1}^l\lambda_i\odot\delta_{x_i}$. We have $\nu_1\oplus\lambda_{l+1}\odot \delta_{x_{l+1}}=\nu$. Hence $\beta_X(\nu_1)\oplus\lambda_{l+1}\odot x_{l+1}=\beta_X(\nu)$ by Corollary \ref{afin}.

Consider any net $\{x^\alpha\}$ in $X$ converging to $\beta_X(\nu)$. Since the map $s_X$ is open, there exists a net $\{(y^\alpha,x_{l+1}^\alpha,t^\alpha,\lambda^\alpha_{l+1})\}$ in $X\times X\times J$ converging to $(\beta_X(\nu_1),x_{l+1},0,\lambda_{l+1})$ such that $t^\alpha\odot y^\alpha\oplus\lambda^\alpha_{l+1}\odot x^\alpha_{l+1}=x^\alpha$. By the induction assumption there exists a net $\{\nu_1^\alpha\}$ converging to $\nu_1$ such that $\beta_X(\nu_1^\alpha)=y^\alpha$. Then the net $\{t^\alpha\odot \nu_1^\alpha\oplus \lambda^\alpha_{l+1}\odot\delta_{x^\alpha_{l+1}}\}$ converges to $\nu$ and $\beta_X(t^\alpha\odot \nu_1^\alpha\oplus \lambda^\alpha_{l+1}\odot\delta_{x^\alpha_{l+1}})=x^\alpha$ for each $\alpha$.
\end{proof}

Theorems \ref{Gen} and \ref{main} yield the following corollary.

\begin{corollary}\label{free} The map $\beta_{IX}$ is open for each compactum $X$.
\end{corollary}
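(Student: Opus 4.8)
The plan is to obtain this corollary by feeding Theorem~\ref{Gen} directly into Theorem~\ref{main}. The essential observation is that for every compactum $X$ the space $IX$ is itself a Max-Plus convex compactum: it was recalled in Section~2 (following \cite{Zar}) that $IX$, viewed inside $\R^{C(X)}$, is a compact Max-Plus convex subset. Hence Theorem~\ref{main} is applicable verbatim with the Max-Plus convex compactum ``$X$'' of that theorem taken to be $IX$.

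First I would record that the map $s_{IX}\colon IX\times IX\times J\to IX$ introduced in Section~3 is nothing but the map $s_X$ of Section~4 in the case where the ambient Max-Plus convex compactum is $IX$: both are given by the same formula $(x,y,t,p)\mapsto t\odot x\oplus p\odot y$, which makes sense on any Max-Plus convex compact subset of some $\R^\tau$. With this identification in place, the equivalence $2.\Leftrightarrow 3.$ of Theorem~\ref{main}, applied to the compactum $IX$, states precisely that $\beta_{IX}$ is open if and only if $s_{IX}$ is open.

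It then remains only to invoke Theorem~\ref{Gen}, which asserts that $s_{IX}$ is open for every compactum $X$. Combining this with the previous paragraph yields that $\beta_{IX}$ is open, which is the assertion of the corollary.

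There is essentially no real obstacle here; the proof is a one-line combination of two already established results. The only point deserving a moment's care is the bookkeeping: namely, confirming that the $s_{IX}$ of Section~3 coincides with the Section~4 map $s_X$ evaluated at $X=IX$, and recalling that $IX$ does genuinely embed in some $\R^\tau$ as a Max-Plus convex compactum, so that the hypotheses of Theorem~\ref{main} are literally satisfied by $IX$.
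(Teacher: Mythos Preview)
Your proposal is correct and matches the paper's own argument exactly: the paper simply states that Theorems~\ref{Gen} and~\ref{main} together yield the corollary, which is precisely the combination you spell out. Your explicit remark that the $s_{IX}$ of Section~3 is the instance of the Section~4 map $s_X$ at the Max-Plus convex compactum $IX$ just makes transparent what the paper leaves implicit.
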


Let us consider an example of a max-plus convex compactum $K$ such that the map $\beta_K$ is open but the map $(x, y)\mapsto x \oplus y$ is not. This example gives a negative answer to the second part of Zarichnyi Question 7.2 and demonstrate some difference  between the theory of probability measures and idempotent measures. Put $K=ID$ where $D=\{0,1\}$ is a two-point discrete compactum. Then the map $\beta_K$ is open by Corollary \ref{free}. Put $\nu_t=t\odot\delta_0\oplus\delta_1$. Then the sequence $\{\nu_{-1/i}\}$ converges to $\nu_0=\delta_0\oplus\delta_1$. Consider a function $\varphi\in C(D)$ defined by the formula $\varphi(i)=i$, $i\in\{0,1\}$ and an open neighborhood $O=\{(\mu,\gamma)\in ID\times ID\mid |\mu(\varphi)|<1/2\}$ of $(\delta_0,\delta_1)$ in $ID\times ID$.  Consider any pair $(\alpha,\beta)\in ID\times ID$ such that $\alpha\oplus\beta=\nu_{-1/i}$ for some $i\in\N$. We have $\alpha=\alpha_0\odot\delta_0\oplus\alpha_1\odot\delta_1$ and $\beta=\beta_0\odot\delta_0\oplus\beta_1\odot\delta_1$ for some $\alpha_0$, $\alpha_1$, $\beta_0$, $\beta_1\in\R_{\max}$ such that $\alpha_0\oplus\alpha_1=\beta_0\oplus\beta_1=0$. Since $\alpha_0\le-1/i$, we have
$\alpha_1=0$ and $\alpha(\varphi)=1\ge1/2$. Hence $(\alpha,\beta) \notin O$ and the map $(x, y)\mapsto x \oplus y$ is not open.

\section{$I$-barycentrically open compacta and extremal points}

A max-plus convex compactum $K$ such that the map $\beta_K$ is open is called $I$-{\it barycentrically open compactum}. Corollary \ref{free} states, in fact, that the class of $I$-barycentrically open compacta contains
all compacta $IX$. We try to find more $I$-barycentrically open compacta in this section.

Let $\{X_\alpha\}_{\alpha\in A}$ be a family of  max-plus convex compacta. Then the product $X=\prod_{\alpha\in A}X_\alpha$ has a natural structure of max-plus convexity with coordinatewise  operation: $t\odot(x_\alpha)\oplus(y_\alpha)=(t\odot x_\alpha\oplus y_\alpha)$ where $(x_\alpha)$, $(y_\alpha)\in X$ and $t\in [-\infty,0]$.

Denote $Q=[a,b]^H$ where $H$ is any set (finite or infinite) and $a$, $b\in\R$ such that $a\le b$.

\begin{theorem}\label{cube} The cube $Q$ is $I$-barycentrically open.
\end{theorem}

\begin{proof} By Theorem \ref{main} it is enough to prove that the map $s_Q$ is open. Let $t=\alpha\odot x\oplus\beta\odot y$ where $(\alpha,\beta)\in J$ and $x$, $y\in Q$. Consider any net $(t_i)$ in $Q$ converging to $t$. We need to find a net $(x_i,y_i,\alpha_i,\beta_i)$ in $Q\times Q\times J$ converging to $(x,y,\alpha,\beta)$ such that $t_i=\alpha_i\odot x_i\oplus\beta_i\odot y_i$. We have $x=(x^\gamma)_{\gamma\in H}$, $y=(y^\gamma)_{\gamma\in H}$, $t=(t^\gamma)_{\gamma\in H}$ and $t_i=(t_i^\gamma)_{\gamma\in H}$ for each $i$. It is enough to find for each $\gamma\in H$ a net $(x^\gamma_i,y^\gamma_i,\alpha_i,\beta_i)$ in $[a,b]\times [a,b]\times J$ converging to $(x^\gamma,y^\gamma,\alpha,\beta)$ such that $t^\gamma_i=\alpha_i\odot x^\gamma_i\oplus\beta_i\odot y^\gamma_i$ ($\alpha_i$ and $\beta_i$ do not depend on $\gamma$!).

Since $(\alpha,\beta)\in J$, we have $\alpha=0$ or  $\beta=0$. We assume $\beta=0$. (The proof is analogous for $\alpha=0$). Put $\beta_i=\beta=0$.

If $\alpha=-\infty$, we put $\alpha_i=\alpha=-\infty$, $y^\gamma_i=t^\gamma_i$ and  $x^\gamma_i=x^\gamma$.

Consider the case $\alpha=0$.  If $y^\gamma<x^\gamma=t^\gamma$ we put $x^\gamma_i=t^\gamma_i$ and
$$
y^\gamma_i=\begin{cases}
y^\gamma,&y^\gamma<t^\gamma_i,\\
t^\gamma_i,&y^\gamma\ge t^\gamma_i.\end{cases}
$$

If $x^\gamma<y^\gamma=t^\gamma$, we put $y^\gamma_i=t^\gamma_i$ and
$$
x^\gamma_i=\begin{cases}
x^\gamma,&x^\gamma<t^\gamma_i,\\
t^\gamma_i,&x^\gamma\ge t^\gamma_i.\end{cases}
$$

Finally, if $x^\gamma=y^\gamma=t^\gamma$, we put $y^\gamma_i=x^\gamma_i=t^\gamma_i$. It is easy to see that $(x^\gamma_i,y^\gamma_i,0,0)$ is a net we are looking for.

Let $\alpha<0$. We consider the case when the set $H$ is finite. Then we can assume that the  net $(t_i)$ is a sequence ($i\in\N$). Since $(t_i)$ converges to $t$, we can assume that $t^\gamma_i-t^\gamma\le-\alpha$ for each $i\in\N$ and $\gamma\in H$. Then put  $\alpha_i=\alpha\odot\max\{t^\beta_i-t^\beta\mid\beta\in H\}$ for $i\in\N$. We have that the sequence  $(\alpha_i)$ converges to $\alpha$ and $\alpha_i\le 0$.

If $y^\gamma<\alpha\odot x^\gamma=t^\gamma$ we put $x^\gamma_i=-\alpha_i\odot t^\gamma_i$ and
$$
y^\gamma_i=\begin{cases}
y^\gamma,&y^\gamma<t^\gamma_i,\\
t^\gamma_i,&y^\gamma\ge t^\gamma_i.\end{cases}
$$

 We have $x^\gamma_i=-\alpha_i\odot t^\gamma_i\rightarrow-\alpha\odot t^\gamma=x^\gamma$ and $x^\gamma_i=-\alpha_i\odot t^\gamma_i=-\alpha-\max\{t^\beta_i-t^\beta\mid\beta\in H\}\odot t^\gamma_i\le-\alpha\odot t^\gamma=x^\gamma\le b$. Evidently $x^\gamma_i\ge a$. Since  $y^\gamma<\alpha\odot x^\gamma=t^\gamma$, there exists $i_0\in\N$ such that $y^\gamma<t^\gamma_i$ for each $i\ge i_0$. Hence $y^\gamma_i\rightarrow y^\gamma$. Finally, we have $y^\gamma_i\le t^\gamma_i$ and $\alpha_i\odot x^\gamma_i=t^\gamma_i$, hence $t^\gamma_i=\alpha_i\odot x^\gamma_i\oplus y^\gamma_i$.

If $x^\gamma<y^\gamma=t^\gamma$ we put $y^\gamma_i=t^\gamma_i$ and
$$
x^\gamma_i=\begin{cases}
x^\gamma,&\alpha_i\odot x^\gamma<t^\gamma_i,\\
-\alpha_i\odot t^\gamma_i,&\alpha_i\odot x^\gamma\ge t^\gamma_i.\end{cases}
$$

Evidently $y^\gamma_i\rightarrow y^\gamma$.  Since $t^\gamma_i\rightarrow t^\gamma>\alpha\odot x^\gamma\leftarrow\alpha_i\odot x^\gamma$, we have $x^\gamma_i\rightarrow x^\gamma$. As before, we have $x\in[a,b]$. Finally, we have $\alpha_i\odot x^\gamma_i\le t^\gamma_i$, hence $t^\gamma_i=\alpha_i\odot x^\gamma_i\oplus y^\gamma_i$.

If $y^\gamma=\alpha\odot x^\gamma=t^\gamma$ we put $y^\gamma_i=t^\gamma_i$ and $x^\gamma_i=-\alpha_i\odot t^\gamma_i$. It is easy to see that $(x^\gamma_i,y^\gamma_i,\alpha_i,0)$ is a sequence we are looking for.

Consider the case when the set $H$ is infinite. We can assume that the net $(t^\gamma_i)_{i\in T}$ is indexed by the up-directed family of finite subsets of $H$. So, $T=\{A\subset H\mid A$ is finite $\}$. We also assume that  $\max\{t^\beta_A-t^\beta\mid\beta\in A\}<-\alpha$ for each $A\in T$. Put $\alpha_A=\alpha\odot\max\{t^\beta_A-t^\beta\mid\beta\in A\}$ for $A\in T$. We have that the net  $(\alpha_A)$ converges to $\alpha$ and $\alpha_A\le 0$ for each $A\in T$.

If $y^\gamma<\alpha\odot x^\gamma=t^\gamma$ we put
$$
x^\gamma_A=\begin{cases}
\min\{x^\gamma,-\alpha_A\odot t^\gamma_A\},&\gamma\notin A,\\
-\alpha_A\odot t^\gamma_A,&\gamma\in A\end{cases}
$$ and
$$
y^\gamma_A=\begin{cases}
y^\gamma,&y^\gamma<t^\gamma_A {\text\ and\ } \gamma\in A,\\
t^\gamma_A,&y^\gamma\ge t^\gamma_A {\text\ or\ } \gamma\notin A.\end{cases}
$$

If $x^\gamma<y^\gamma=t^\gamma$ we put $y^\gamma_A=t^\gamma_A$ and
$$
x^\gamma_A=\begin{cases}
x^\gamma,&\alpha_A\odot x^\gamma<t^\gamma_A{\text\ and\ } \gamma\in A,\\
t^\gamma_A,&\alpha_A\odot x^\gamma\ge t^\gamma_A {\text\ or\ } \gamma\notin A.\end{cases}
$$

If $y^\gamma=\alpha\odot x^\gamma=t^\gamma$ we put $y^\gamma_i=t^\gamma_i$ and
$$
x^\gamma_A=\begin{cases}
t^\gamma_A,&\gamma\notin A,\\
-\alpha_A\odot t^\gamma_A,&\gamma\in A.\end{cases}
$$ We obtain by routine checking that  $(x^\gamma_A,y^\gamma_A,\alpha_A,0)$ is a net we are looking for.
\end{proof}

The following example shows that we can not generalize the previous theorem to a product of any  $I$-barycentrically open compacta.
We consider  the two-point set $\{0,1\}$ with the discrete topology. The max-plus convex compactum $I(\{0,1\})=\{\oplus_{i=0}^1\lambda_i\odot\delta_i\mid\lambda_i\in\R_{\max},$ such that $\oplus_{i=0}^1\lambda_i=0\}$ is $I$-barycentrically open by  Corollary \ref{free}.

\begin{proposition}\label{prod} The max-plus convex compactum $I(\{0,1\})\times I(\{0,1\})$ is not  $I$-barycentrically open.
\end{proposition}

\begin{proof} We have $(\delta_0,\delta_0)\oplus(\delta_1,\delta_1)=(\delta_0\oplus\delta_1,\delta_0\oplus\delta_1)$. Define functions $\varphi_0$, $\varphi_1:\{0,1\}\to\R$ as follows $\varphi_0(0)=0$, $\varphi_0(1)=1$ and $\varphi_1(0)=1$, $\varphi_1(1)=0$. Define neighborhoods $O_0$ and $O_1$ of $(\delta_0,\delta_0)$ and $(\delta_1,\delta_1)$ respectively as follows $O_0=\{(\nu,\mu)\in I(\{0,1\})\times I(\{0,1\})\mid \nu(\varphi_0)<\frac{1}{2}$ and $\mu(\varphi_0)<\frac{1}{2}\}$ and $O_1=\{(\eta,\alpha)\in I(\{0,1\})\times I(\{0,1\})\mid \eta(\varphi_1)<\frac{1}{2}$ and $\alpha(\varphi_1)<\frac{1}{2}\}$.

The sequence $(-\frac{1}{n}\odot\delta_0\oplus\delta_1,-\frac{1}{n}\odot\delta_1\oplus\delta_0)$ converges to $(\delta_0\oplus\delta_1,\delta_0\oplus\delta_1$.
For $n\in\N$ consider $(\nu_n,\mu_n)\in I(\{0,1\})\times I(\{0,1\})$, $(\eta_n,\alpha_n)\in I(\{0,1\})\times I(\{0,1\})$ and $k_n$, $l_n\in [-\infty,0]$ such that $k_n\odot(\nu_n,\mu_n)\oplus l_n\odot(\eta_n,\alpha_n)=(-\frac{1}{n}\odot\delta_0\oplus\delta_1,-\frac{1}{n}\odot\delta_1\oplus\delta_0)$.

Consider the case $l_n=0$. Then $k_n\odot\nu_n\oplus \eta_n=-\frac{1}{n}\odot\delta_0\oplus\delta_1$. Then we have $\eta_n=e_n^0\odot\delta_0\oplus e_n^1\delta_1$ with $e_n^1\le-\frac{1}{n}$, hence  $e_n^0=0$. Then we have $\eta_n(\varphi_1)\ge 1$ and $(\eta_n,\alpha_n)\notin O_1$. In the case $k_n=0$ using analogous arguments we obtain $(\nu_n,\mu_n)\notin O_0$.

Hence the map $s_{I(\{0,1\})\times I(\{0,1\})}$ is not open. Then $I(\{0,1\})\times I(\{0,1\})$ is not  $I$-barycentrically open by Theorem \ref{main}.
\end{proof}

Hence the answer to Question 7.3 \cite{Zar} generally is negative.



A map $f:X\to Y$ between max-plus convex compacta $X$ and $Y$ is called max-plus affine if for each  $a, b\in X$ and $\alpha\in[-\infty,0]$ we have $f(\alpha\odot a\oplus  b)=\alpha\odot f(a)\oplus  f(b)$. A max-plus convex compactum $X$ is called a max-plus affine retract of a max-plus convex compactum $Y$ if there exist affine maps $r:Y\to X$ and $i:X\to Y$ such that $r\circ i=\id_X$. The map $r$ is called a retraction.

The proofs of the following two theorems are analogous to the proofs of its counterparts for probability measures (Theorems 7.5 and 7.6 from \cite{Fed2}).

\begin{theorem}\label{ar} Affine retract of an $I$-barycentrically open compactum is $I$-barycentrically open. \end{theorem}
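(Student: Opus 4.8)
The plan is to invoke Theorem \ref{main}: since $X$ is a Max-Plus convex compactum, it suffices to prove that the map $s_X$ is open. The hypothesis that the ambient compactum $Y$ is $I$-barycentrically open means $\beta_Y$ is open, hence by Theorem \ref{main} the map $s_Y$ is open. Let $r:Y\to X$ and $i:X\to Y$ be the affine maps witnessing that $X$ is an affine retract of $Y$, so $r\circ i=\id_X$. The whole argument will consist of transporting the openness of $s_Y$ to $s_X$ along this retraction.

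First I would record how $s$ interacts with an affine map. For any affine $g$ between Max-Plus convex compacta and any $(t,p)\in J$ one has $g(t\odot a\oplus p\odot b)=t\odot g(a)\oplus p\odot g(b)$: the defining identity $g(\alpha\odot a\oplus b)=\alpha\odot g(a)\oplus g(b)$ gives this directly when $p=0$, and the case $t=0$ follows from the same identity together with the commutativity of $\oplus$. Applying this to $i$ and to $r$ yields the two compatibility relations
$$i\circ s_X=s_Y\circ(i\times i\times\id_J)\quad\text{and}\quad r\circ s_Y=s_X\circ(r\times r\times\id_J).$$
Verifying these (in particular, that affineness in its one-sided form really is compatible with the two-sided operation $s$ over all of $J$) is the only step requiring care, though it is immediate from the remark above.

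With these relations established, the openness of $s_X$ follows by the net-lifting argument used in the proofs of Theorems \ref{main} and \ref{prod}. Take any $(x,y,t,p)\in X\times X\times J$, put $z=s_X(x,y,t,p)$, and let $\{z^\gamma\}$ be a net converging to $z$. Since $i$ is continuous, the first relation gives $i(z^\gamma)\to i(z)=s_Y(i(x),i(y),t,p)$. As $s_Y$ is open, there is a net $\{(a^\gamma,b^\gamma,t^\gamma,p^\gamma)\}$ in $Y\times Y\times J$ converging to $(i(x),i(y),t,p)$ with $s_Y(a^\gamma,b^\gamma,t^\gamma,p^\gamma)=i(z^\gamma)$. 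Now I would push this net back by $r$: setting $x^\gamma=r(a^\gamma)$ and $y^\gamma=r(b^\gamma)$, continuity of $r$ together with $r\circ i=\id_X$ gives $x^\gamma\to r(i(x))=x$ and $y^\gamma\to r(i(y))=y$, while the second relation gives $s_X(x^\gamma,y^\gamma,t^\gamma,p^\gamma)=r(s_Y(a^\gamma,b^\gamma,t^\gamma,p^\gamma))=r(i(z^\gamma))=z^\gamma$. Thus $\{(x^\gamma,y^\gamma,t^\gamma,p^\gamma)\}$ is the required lift, so $s_X$ is open and, by Theorem \ref{main}, $\beta_X$ is open.

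I do not expect a genuine technical obstacle here; the result is a standard retract-transport argument. The only real point is conceptual: the retraction must be used in two directions at once — lifting a convergent net through $i$ and the openness of $s_Y$, then projecting the lift back through $r$ — and the two affine identities are precisely what guarantee that the projected net both converges to the right limit and has the prescribed image $z^\gamma$. (Should the characterization of open maps force a subnet at the lifting step, applying $r$ to that subnet preserves all the stated properties, so the conclusion is unaffected.)
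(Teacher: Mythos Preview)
Your argument is correct. The paper does not actually supply a proof here: it merely says the proof is analogous to Fedorchuk's Theorem~7.5 for probability measures. That classical argument runs the retract-transport directly on the barycenter map, using the affine-naturality relations $\beta_Y\circ I(i)=i\circ\beta_X$ and $\beta_X\circ I(r)=r\circ\beta_Y$ together with $I(r)\circ I(i)=\id_{IX}$: one lifts a net $x^\gamma\to\beta_X(\mu)$ through $i$, then through the open map $\beta_Y$ at $I(i)(\mu)$, and projects the result back by $I(r)$. You instead pass through Theorem~\ref{main} first and carry out the identical retract-transport on $s$ rather than on $\beta$. This is a harmless reroute: you trade the (not explicitly stated in the paper) affine-naturality of $\beta$ for the immediate affine-compatibility of $s$, at the price of two invocations of Theorem~\ref{main}. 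Either way the substance is the same pair of affine identities plus one net-lift, and your handling of the possible subnet in the open-map characterization is adequate.
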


\begin{theorem}\label{oi} Open affine image of an $I$-barycentrically open compactum is $I$-barycentrically open. \end{theorem}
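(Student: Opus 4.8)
The plan is to reduce Theorem~\ref{oi} to Theorem~\ref{main} by showing that if $g:X\to Y$ is an open surjective Max-Plus affine map and $s_X$ is open, then $s_Y$ is open. The guiding principle is exactly the one used for Theorem~\ref{ar}: we want to write $s_Y$ as a composition of open maps, using the affine map $g$ to transport the Max-Plus structure. Since $g$ is Max-Plus affine, it intertwines the combination maps, i.e.\ $g\circ s_X = s_Y\circ(g\times g\times\id_J)$; this is the analogue of Lemma~\ref{Comm} and follows immediately from the defining identity $g(\alpha\odot a\oplus b)=\alpha\odot g(a)\oplus g(b)$.

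First I would record that $g\times g\times\id_J:X\times X\times J\to Y\times Y\times J$ is open, being a product of open maps with an identity. Combined with the hypothesis that $s_X$ is open, the composite $g\circ s_X = s_X$ followed by $g$ is open as a composition of open maps. Then by the intertwining identity, $s_Y\circ(g\times g\times\id_J)$ is open. Now the key point: $g\times g\times\id_J$ is also \emph{surjective} (again because $g$ is onto and $J$ maps onto itself), so for any open $V\subseteq Y\times Y\times J$ we have $s_Y(V)=s_Y\bigl((g\times g\times\id_J)((g\times g\times\id_J)^{-1}(V))\bigr)=\bigl(s_Y\circ(g\times g\times\id_J)\bigr)\bigl((g\times g\times\id_J)^{-1}(V)\bigr)$, and the right-hand side is open because $s_Y\circ(g\times g\times\id_J)$ is open and the preimage of an open set is open. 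Hence $s_Y$ is open. By Theorem~\ref{main} applied to $Y$, the map $\beta_Y$ is open, so $Y$ is $I$-barycentrically open.

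The step I expect to require the most care is verifying that $g$ being Max-Plus affine genuinely yields the full intertwining square $g\circ s_X=s_Y\circ(g\times g\times\id_J)$ on the nose: the definition of affine is stated only for $\alpha\odot a\oplus b$ with the implicit coefficient on $b$ equal to $0$, whereas $s_X(x,y,t,p)=t\odot x\oplus p\odot y$ allows $(t,p)\in J$ with either coordinate $0$. One must check that affineness in the stated one-sided form propagates to the symmetric two-parameter form using that $(t,p)\in J$ forces $t=0$ or $p=0$, together with $g(0_{\max}\odot a\oplus b)$-type manipulations and the compatibility $g(\alpha\odot a)=\alpha\odot g(a)$ (which one derives by taking $b=a$, or is part of the intended reading of ``affine''). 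Once this identity is in hand, the rest is the purely formal ``open divisor of an open map'' argument, identical in spirit to the final step of Theorem~\ref{Gen}. As the paper notes, this mirrors Theorem~7.6 of \cite{Fed2}, so no genuinely new idea beyond Theorem~\ref{main} is needed.
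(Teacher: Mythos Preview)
Your argument is correct. The intertwining $g\circ s_X=s_Y\circ(g\times g\times\id_J)$ follows, as you note, from the definition of Max-Plus affine together with the fact that $(t,p)\in J$ forces one of $t,p$ to be $0$ and $\oplus$ is commutative; after that, the ``left divisor of an open map through a surjection'' step is exactly the mechanism used at the end of Theorem~\ref{Gen}.

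The paper does not actually supply a proof here; it only says the argument is analogous to Fedorchuk's Theorem~7.6 for probability measures. A literal transcription of that argument would work directly with the barycenter maps rather than with $s$: one checks that for a Max-Plus affine $g$ one has $g\circ\beta_X=\beta_Y\circ I(g)$, observes that $I(g)$ is surjective because the functor $I$ preserves surjections, and then applies the same left-divisor argument to conclude that $\beta_Y$ is open. Your route via $s_X$ and Theorem~\ref{main} is a mild variation of this pattern. Its advantage is that the intertwining identity for $s$ is a one-line consequence of the definition of ``affine'', whereas the intertwining $g\circ\beta_X=\beta_Y\circ I(g)$ requires a (still easy, but not entirely tautological) verification. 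The Fedorchuk-style route, on the other hand, avoids invoking Theorem~\ref{main} altogether. Either path is acceptable and neither introduces any new idea beyond what is already in the paper.
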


Let $X$ be a max-plus convex compactum. Following \cite{GK}  we call a point $x\in X$ an extremal point if for each two points $y,z\in X$ and for each $t\in [-\infty,0]$ the equality $x=t\odot y\oplus z$  implies $x\in\{y,z\}$. The set of extremal points of a max-plus convex compactum $X$ we denote by $\ext(X)$.

\begin{theorem}\label{extr} Let $X$ be a max-plus convex compactum such that the map $\beta_X$ is open. Then the set $\ext(X)$ is closed in $X$. \end{theorem}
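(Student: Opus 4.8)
The plan is to show that the complement $X \setminus \ext(X)$ is open, by producing, around any non-extremal point, a whole neighborhood of non-extremal points. So suppose $x_0 \in X \setminus \ext(X)$. Then there are $y,z \in X$ and $t \in [-\infty,0]$ with $x_0 = t\odot y \oplus z$ and $x_0 \notin \{y,z\}$. The first thing I would do is normalize the situation: since $x_0 = t\odot y\oplus z$ with $x_0 \ne z$, we must have $t > -\infty$ (otherwise $x_0 = z$), and we can write $x_0 = s_X(y,z,t,0)$ for the point $(y,z,t,0)\in X\times X\times J$. The key extra geometric input I expect to need is that $y$ and $z$ can be chosen so that $x_0$ lies "strictly between" them in a robust way — concretely, I would like a small parameter $\e>0$ so that $(t+\e)\odot y\oplus z$ and $t\odot y\oplus(\text{something slightly below }z)$ are still legitimate convex combinations producing points near $x_0$ but witnessing non-extremality; this is where one uses that $x_0\notin\{y,z\}$ to get genuine room.

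Next I would invoke openness of $\beta_X$, which by Theorem \ref{main} is equivalent to openness of $s_X : X\times X\times J \to X$. Fix disjoint-from-the-diagonal open neighborhoods: choose open $V\ni y$, $U\ni z$ in $X$ with $\overline{V}\cap\overline{U}=\emptyset$ (possible since $y\ne z$ and $X$ is Hausdorff — shrink if $y=z$ is impossible here because then $x_0=y$), and an open $O\ni(t,0)$ in $J$. Then $W := s_X(V\times U\times O)$ is an open neighborhood of $x_0$ in $X$. I claim every point $w\in W$ is non-extremal: by definition $w = s_X(v,u,t',p') = t'\odot v\oplus p'\odot u$ for some $v\in V$, $u\in U$, $(t',p')\in O\cap J$; since $\overline{V}\cap\overline{U}=\emptyset$ we have $v\ne u$, so this is a convex combination of two distinct points, and to conclude $w$ is non-extremal it remains to rule out $w\in\{v,u\}$. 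The honest difficulty is that $s_X(v,u,t',p')$ could accidentally equal $v$ or $u$, i.e. the combination could be trivial even though $v\ne u$; this is the main obstacle and the reason the neighborhood $O$ of $(t,0)$ must be chosen with care.

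To handle that obstacle I would exploit that, for the original triple, $x_0\notin\{y,z\}$ quantitatively. Pick a coordinate $c\in T$ (recall $X\subset\R^T$) on which $\pr_c(x_0)\ne\pr_c(y)$ — such $c$ exists since $x_0\ne y$ — and, if possible, simultaneously arrange $\pr_c(x_0)\ne\pr_c(z)$; if no single coordinate separates $x_0$ from both, use one coordinate $c_1$ separating $x_0$ from $y$ and another $c_2$ separating $x_0$ from $z$. Shrinking $V$, $U$, $O$ so that the value $s_X(v,u,t',p')$ on these finitely many coordinates stays within $\delta$ of $\pr_{c_i}(x_0)$ while $\pr_{c_i}(v)$ stays $\delta$-close to $\pr_{c_i}(y)$ and $\pr_{c_i}(u)$ stays $\delta$-close to $\pr_{c_i}(z)$, with $\delta$ smaller than a third of $\min_i\{|\pr_{c_i}(x_0)-\pr_{c_i}(y)|,|\pr_{c_i}(x_0)-\pr_{c_i}(z)|\}$, forces $w=s_X(v,u,t',p')\ne v$ and $\ne u$ on the relevant coordinate, hence $w\notin\{v,u\}$. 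Then $w = t'\odot v\oplus p'\odot u$ exhibits $w$ as a nontrivial Max-Plus combination of two distinct points of $X$, so $w\notin\ext(X)$. This shows $W\subset X\setminus\ext(X)$, so $X\setminus\ext(X)$ is open and $\ext(X)$ is closed. I expect the only genuinely delicate bookkeeping to be the simultaneous coordinate estimates ensuring the combination stays nontrivial; everything else is a direct application of Theorem \ref{main} and the Hausdorff separation of $y$ from $z$.
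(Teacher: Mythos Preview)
Your approach is correct and is essentially the paper's argument recast in direct form: both proofs use Theorem~\ref{main} to pass to the openness of $s_X$, then exploit Hausdorff separation of $x_0$ from $\{y,z\}$ to ensure the nearby combinations remain nontrivial. The paper runs it as a contradiction (a net $\{x_\alpha\}\subset\ext(X)$ converging to $x_0$), which is slightly slicker because the condition ``$x_\alpha\notin V\cup U$ eventually'' comes for free once $V\ni y$, $U\ni z$ are chosen with $x_0\notin\Cl(V\cup U)$; then any representation $x_\alpha=p\odot y_\alpha\oplus q\odot z_\alpha$ with $y_\alpha\in V$, $z_\alpha\in U$ immediately forces $x_\alpha\notin\{y_\alpha,z_\alpha\}$, contradicting extremality. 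Your coordinate bookkeeping achieves the same end but is heavier than needed: you can drop the coordinate projections entirely by first fixing $V_0\ni y$, $U_0\ni z$ with $x_0\notin\overline{V_0\cup U_0}$, and then using continuity of $s_X$ to shrink to $V\subset V_0$, $U\subset U_0$, $O$ with $s_X(V\times U\times O)\cap(V_0\cup U_0)=\emptyset$; this gives $w\notin\{v,u\}$ directly. One small slip in your write-up: your quantity $\min_i\{|\pr_{c_i}(x_0)-\pr_{c_i}(y)|,|\pr_{c_i}(x_0)-\pr_{c_i}(z)|\}$ may vanish (you only arranged $\pr_{c_1}(x_0)\ne\pr_{c_1}(y)$ and $\pr_{c_2}(x_0)\ne\pr_{c_2}(z)$, not all four inequalities), so the correct bound is $\min\{|\pr_{c_1}(x_0)-\pr_{c_1}(y)|,\ |\pr_{c_2}(x_0)-\pr_{c_2}(z)|\}$.
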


\begin{proof} Suppose the contrary. There exists a net $\{x_\alpha\}$ in $\ext(X)$ converging to a point $x\notin\ext(X)$.   Then there exist $y,z\in X$ and  $t\in [-\infty,0]$ such that $x=t\odot y\oplus z$ and $x\notin\{y,z\}$. Evidently, $y\neq z$. There exist open neighborhoods $V$, $U$ of $y$ and $z$ correspondingly  such that $x\notin\Cl(V\cup U)$. We can suppose that $x_\alpha\notin(V\cup U)$ for each $\alpha$. Since the map $s_X$ is open, there exist $\alpha$, $y_\alpha\in V$, $z_\alpha\in U$ and  $(p,q)\in J$ such that $x_\alpha=p\odot y_\alpha\oplus q\odot z_\alpha$. Since $x_\alpha\in\ext(X)$, we have $x_\alpha\in\{y_\alpha,z_\alpha\}\subset V\cup U$ and we obtain a contradiction.
\end{proof}


An example of a convex compactum with the closed set of extremal points and not open barycenter map was build in \cite{OBr}. We construct an idempotent counterpart. Consider a subset $Y\subset [-2,0]^2$ defined as follows $Y=A\cup B\cup C$ where $A=\{(x,y)\in [-2,0]^2\mid x\in [-2,-1],\ y=-1\}$, $B=\{(x,y)\in [-2,0]^2\mid x=-1,\ y\in [-2,-1]\}$ and $C=\{(x,y)\in [-1,0]^2\mid x=y\}$. It is easy to see that $Y$ is a max-plus convex compactum. Consider points $a=(-2,-1)$, $b=(-1,-2)$, $c=(-1,-1)$  and  a sequence $(c_i)$ where $c_i=(-1+\frac{1}{i}, -1+\frac{1}{i})$ for $i\in\N$. Evidently the sequence $(c_i)$ converges to $c$. Put $\nu=\delta_a\oplus \delta_b$. We have $\beta_Y(\nu)=c$. It is easy to check that there is no sequence $(\nu_i)$ converging to $\nu$ and such that $b_Y(\nu_i)=c_i$. Hence $\beta_Y$ is not open  but  $\ext(X)=\{a,b,(0,0)\}$.

\end{document}